\DeclareFontFamily{U}{rsfs}{\skewchar\font"7F}
\DeclareFontShape{U}{rsfs}{m}{n}{
	<-6> rsfs5
	<6-8> rsfs7
	<8-> rsfs10
	}{}
\DeclareMathAlphabet{\mathscr}{U}{rsfs}{m}{n}
\newcommand{\CF}{{\mathcal F}}
\newcommand{\CG}{{\mathcal G}}
\newcommand{\C}{{\mathbb C}}
\newcommand{\Q}{{\mathbb Q}}
\newcommand{\R}{{\mathbb R}}
\newcommand{\N}{{\mathbb N}}
\newcommand{\vG}{\varGamma}
\newcommand{\id}{{\mathrm{id}}}
\DeclareMathOperator{\codim}{\mathrm{codim}}
\DeclareMathOperator{\Sing}{\mathrm{Sing}}
\DeclareMathOperator{\dom}{\mathrm{dom}}
\DeclareMathOperator{\range}{\mathrm{range}}
\newcommand{\pdif}[2]{\dfrac{\partial#1}{\partial#2}}
\newcommand{\norm}[1]{\lvert#1\rvert}
 \newtheorem{theorem}{Theorem}[section]
 \newtheorem{proposition}[theorem]{Proposition}
 \newtheorem{lemma}[theorem]{Lemma}
 \newtheorem{assumption}[theorem]{Assumption}
\theoremstyle{definition}
 \newtheorem{definition}[theorem]{Definition}
 \newtheorem{example}[theorem]{Example}
 \newtheorem{remark}[theorem]{Remark}
\title{On Fatou and Julia sets of foliations}
\author{Taro Asuke}
\address{Graduate School of Mathematical Sciences, University of Tokyo, 3--8--1 Komaba, Meguro-ku, Tokyo 153--8914, Japan}
\email{asuke@ms.u-tokyo.ac.jp}
\keywords{holomorphic foliation, Fatou set, Julia set, invariant metric}
\subjclass[2010]{Primary 37F75; Secondary 57R30, 32S65}
\date{September 19, 2018}
\thanks{
\noindent
\hskip-3pt\textit{Revised}: 
September 15, 2019.
}
\begin{document}
\begin{abstract}
The Fatou--Julia decomposition is significant in the study of iterations of holomorphic mappings.
Such a decomposition has been considered for foliations in a unified manner by Ghys--Gomez-Mont--Saludes, Haefliger, the author, et~al.
Although the decomposition will be fundamental in the study, it is not easy to determine the decomposition.
In this article, we give a sufficient condition for open sets to be contained in Fatou sets.
We also discuss relations between Fatou--Julia decompositions and minimal sets.
\end{abstract}
\maketitle
\setlength{\baselineskip}{16pt}
\section*{Introduction}
The Fatou--Julia decomposition is significant in the study of iterations of holomorphic mappings and semigroups generated by rational mappings.
Such decompositions are also defined for transversely holomorphic foliations of complex codimension one in a unified manner~\cite{GGS:julia}, \cite{Haefliger:FoliationsGD}, \cite{Asuke:FJ}, \cite{asuke:julia}, where foliations are not necessarily regular (non-singular).
Dynamics of foliations on Fatou sets are expected to be tame.
For example, Fatou sets of foliations are known to admit transverse invariant metrics \cite{Asuke:FJ}*{Theorem~4.21}, \cite{asuke:julia}*{Theorem~5.5}.
However, as in the classical case, it is difficult to determine Fatou sets.
In this article, we give a criterion for open subsets of foliated manifolds to be contained in Fatou sets in terms of transverse invariant metrics.
The basic idea is to use a partial converse to the above-mentioned result \cite{Asuke:FJ}*{Lemma~2.16}, namely, if a regular foliation of a compact manifold admits a transverse invariant metric, then its Julia set is empty, where we consider Julia sets in the sense of \cite{Asuke:FJ}: if we consider Julia sets in the sense of \cite{GGS:julia} or \cite{Haefliger:FoliationsGD}, then there are foliations which admit transverse invariant metrics and of which the Fatou sets are empty \cite{GGS:julia}*{Example~8.6}.
A simple example shows that existence of transverse invariant metrics on an open set invariant under the foliation in consideration does not assure that the open set is contained in the Fatou set (see Remark~\ref{rem3.13-2}).
We will introduce a notion of compact approximations which is a slight generalization of approximations of open sets by compact sets (Definition~\ref{def3.6}) and show the following

\theoremstyle{plain}
\newtheorem*{theoremA}{Theorem~\ref{thm2.6}}
\begin{theoremA}
Let $\CF$ be a transversely holomorphic foliation of complex codimension one, of a manifold $M$ equipped with a reference metric $g$.
Let $U\subset M\setminus\Sing\CF$ be an $\CF$-invariant open set.
Suppose that
\begin{enumerate}
\item
There exists a transverse Hermitian metric on $U$ invariant under the holonomies and bounded from below.
\item
The open set $U$ is compactly approximated.
\end{enumerate}
Then, $U$ is contained in the Fatou set of $\CF$.
\end{theoremA}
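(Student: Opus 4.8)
The plan is to reduce the statement to the compact situation governed by \cite{Asuke:FJ}*{Lemma~2.16}, which asserts that a regular foliation of a compact manifold carrying a transverse invariant metric has empty Julia set. Recall that a point $x\in M\setminus\Sing\CF$ lies in the Fatou set precisely when the holonomy pseudogroup of $\CF$ is equicontinuous (forms a normal family) near $x$ with respect to $g$, so it suffices to establish such equicontinuity at every $x\in U$. The basic mechanism is that a holonomy map $f$ of $\CF$ whose source and target lie in $U$ preserves the transverse Hermitian metric $h$ of hypothesis~(1), hence is a local isometry for the associated transverse distance $d_h$; that is, $d_h(f(z),f(w))=d_h(z,w)$ for $z,w$ near $x$. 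The whole argument then amounts to converting this exact $d_h$-isometry into a uniform $g$-equicontinuity estimate, and the two hypotheses enter at the two ends of that estimate.

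Concretely, I would compare $d_h$ and $d_g$ as follows. Hypothesis~(1) furnishes a global lower bound $h\geq c\,g$ on $U$ with $c>0$, and passing to the induced distances yields $d_g(p,q)\leq c^{-1/2}d_h(p,q)$ for points of $U$. On the source side, continuity of $h$ on a small compact neighborhood of the fixed point $x$ gives a local upper bound $h\leq C\,g$, whence $d_h(z,x)\leq C^{1/2}d_g(z,x)$ there. Applying the isometry relation to a holonomy map $f$ and chaining the two comparisons produces $d_g(f(z),f(x))\leq (C/c)^{1/2}\,d_g(z,x)$, a Lipschitz bound uniform over all holonomy maps defined near $x$ whose images remain in $U$, which is exactly the required equicontinuity. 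The asymmetry is essential: the lower bound must hold on all of $U$ because the targets $f(z)$ may be spread throughout $U$, whereas the upper bound is only needed in a neighborhood of $x$, where it is automatic by continuity. This estimate is the content of \cite{Asuke:FJ}*{Lemma~2.16} in the compact case.

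The remaining, and main, difficulty is that $U$ is not assumed compact, so its holonomy pseudogroup need not be compactly generated; without such control the pointwise comparison above does not by itself deliver a genuine normal family, which is precisely the phenomenon behind Remark~\ref{rem3.13-2}. This is where hypothesis~(2) is indispensable. I would use the compact approximation of Definition~\ref{def3.6} to exhaust $U$ by compact pieces capturing the relevant holonomy, and on each piece invoke the equicontinuity of \cite{Asuke:FJ}*{Lemma~2.16}, the compact approximation guaranteeing that the constants $c$ and $C$ and the domains of the holonomy maps are controlled uniformly as one passes to the approximating sets. I expect the technical heart of the proof to lie here: verifying that the invariance and the lower bound of $h$ are compatible with the approximation in the sense of Definition~\ref{def3.6}, so that the compact-case estimate survives in the limit without loss near the boundary of $U$, thereby placing every point of $U$ in the Fatou set.
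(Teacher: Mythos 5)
Your overall mechanism---invariant metric bounded from below plus compact approximation supplying uniformity---is the right one, but there are two genuine gaps. First, your opening reduction is not available: in this framework membership in the Fatou set is not defined by equicontinuity of the holonomy pseudogroup with respect to $g$, but by the germ-extension property of F-open sets: every germ at a point of an element of the restricted pseudogroup $\vG_{T'}$ must be represented by an element of $\vG$ defined on one fixed open set. Your Lipschitz estimate $d_g(f(z),f(x))\leq(C/c)^{1/2}d_g(z,x)$ holds only where $f$ is \emph{already} defined; under composition the domains may shrink to nothing, and equicontinuity of each holonomy map on its own tiny domain yields neither the extension property nor a normal family on a common domain. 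The technical heart---which your sketch defers to the compact approximation without supplying---is exactly this extension step. The paper takes a finite generating set $\{\gamma_i\}$ of the holonomy pseudogroup $\vG_n$ of $\CF|_{K_n}$ over $K_n\cap T'$, whose germs extend (inside $\vG_{n+1}$, using $K_n\subset\mathop{\mathrm{Int}}K_{n+1}$) over $g$-balls of a uniform radius $\delta$ with derivative bound $\norm{(\gamma_i')_p}\leq C$, and then runs an induction on word length (after \cite{Ghys:flot}): since every holonomy element is an isometry of $h$, it maps the $h$-ball $B'_{\delta'}(p)$, $\delta'=\delta c/2C$, \emph{onto} $B'_{\delta'}(\gamma(p))$, and the lower bound $h\geq c^2h_0$ places this image inside the $g$-ball of radius $\delta/2C$ where the next generator is defined; so arbitrarily long words extend over the same $h$-ball. (Only the lower bound on $h$ is used; your local upper bound $h\leq Cg$ plays no role---the paper's $C$ bounds derivatives of the finitely many generators.) Normality then comes from Montel's theorem on the transversal realized as relatively compact discs in $\C$, and the invariance of $h$-balls is used once more to upgrade the wF-condition to the F-condition.

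Second, you cannot invoke \cite{Asuke:FJ}*{Lemma~2.16} on the approximating pieces. The sets $K_n$ of Definition~\ref{def3.6} need not be compact---only the holonomy pseudogroup of $\CF|_{K_n}$ is required to be compactly generated (see 2) of Example~\ref{ex2.11}, where the $K_n$ are unbounded)---so the hypotheses of that lemma can fail outright. Even when $K_n$ is compact, emptiness of the Julia set of the restricted foliation $\CF|_{K_n}$ does not give $K_n\cap J(\CF)=\varnothing$: germs at points of $K_n$ of elements of $\vG_{T'}$ arise from leaf paths that may leave $K_n$ (they lie in some $K_m$ with $m>n$, since $U$ is $\CF$-invariant and the $K_n$ exhaust $U$, but not in $K_n$ itself), so the Fatou--Julia decomposition is not inherited by restriction and a piecewise appeal to the compact case is circular. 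This is precisely why the paper works with the nested pseudogroups $\vG_n\subset\vG_{n+1}$, proves $K_n\cap T'\subset F^*(\vG_{T'})$ for every relatively compact $T'\subset T$, and only at the end uses openness of $U$ to pass through the closure in the definition $J(\vG)=\overline{\bigcup_{T'}J^*(\vG_{T'})}$ and conclude $U\cap J(\vG)=\varnothing$. Your proposal names the right ingredients but omits the extension induction and misidentifies the compact-case lemma as the engine, so as written it does not close.
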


In the above theorem, $\Sing\CF$ denotes the singular set of $\CF$ which will be defined in Definition~\ref{def1.3}.
A \textit{reference metric} is a Riemannian metric with which transverse Hermitian metrics are compared (see Definition~\ref{def3.3}).
We will also show that both metrics and compact approximations are necessary.
A similar result is already obtained in \cite{asuke:julia}*{Proposition~4.24}, where holonomy pseudogroups are assumed to be compactly generated.
In terms of foliations, this implies that we consider regular foliations of compact manifolds.
In Theorem~\ref{thm2.6}, we deal with singular foliations so that holonomy pseudogroups are not necessarily compactly generated, and additional observations are needed.

When studying foliations, minimal sets are significant.
In the theory of secondary characteristic classes for foliations, some similarities between minimal sets and Julia sets are known~\cite{Asuke:FJ}*{Section~6}.
We will discuss relations between minimal sets and Julia sets from a viewpoint of dynamical systems.

This article is organized as follows.
First we recall definitions of foliations and their singularities.
Next, we introduce Fatou and Julia sets after~\cite{asuke:julia} in Section~2.
Relations between Fatou sets and transverse invariant metrics are discussed in Section~3, where the main result will be shown.
Finally, minimal sets are discussed in Section~4.\par
We are grateful to M.~Asaoka and J.~Rebelo for discussions in preparing the present article.
We are also grateful to referees for their comments.

\section{Foliations}
Throughout this article, we work in the $C^\infty$ or holomorphic category.
In addition, manifolds are always assumed to be second countable.
In view of~\cite{Db} and \cite{AS}, we introduce the following

\begin{definition}[\cite{AS}, cf.~\cite{Db}]
\label{def1.1}
Let $M$ be a manifold.
A \textit{singular foliation} of $M$ is a partition $\CF=\{L_\lambda\}$ of $M$ into injectively immersed manifolds, called the \textit{leaves}, such that for any $p\in M$, there exist an open neighborhood $U_p$ of $p$ and a finite number of vector fields, say $X_1,\ldots,X_r$, on $U_p$ which satisfy the following conditions:
\begin{enumerate}
\item
We have $[X_i,X_j](q)\in\langle X_1(q),\ldots,X_r(q)\rangle$ for any $q\in U_p$, where the right hand side denotes the subspace of $T_qM$ generated by $X_1(q),\ldots,X_r(q)$.
\item
We have $T_qL_q=\langle X_1(q),\ldots,X_r(q)\rangle$ for any $q\in U_p$, where $L_q$ denotes the (unique) leaf which contains $q$.
\end{enumerate}
The vector fields $X_1,\ldots,X_r$ as above are called \textit{local generators} of $\CF$.
If $M$ is a complex manifold and if $X_i$'s are holomorphic, then $\CF$ is said to be \textit{holomorphic}.
\end{definition}

In what follows, we mean by `foliations' singular foliations if there is no fear of confusion.

It is easy to show the following
\begin{lemma}
The mapping $p\mapsto\dim L_p$ is lower semi-continuous.
\end{lemma}

\begin{definition}
\label{def1.3}
Let $\CF$ be a singular foliation of $M$.
The maximal value of $\{\dim L_p\mid p\in M\}$ is said to be the \textit{dimension} of $\CF$ and is denoted by $\dim\CF$.
If $\dim M=m$, then $m-\dim\CF$ is called the \textit{codimension} of $\CF$ and is denoted by $\codim\CF$.
We set
\[
\Sing\CF=\{p\in M\mid\dim L_p<\dim\CF\}.
\]
The restriction of $\CF$ to $M\setminus\Sing\CF$ is called the \textit{regular part} of $\CF$ and is denoted by $\CF^{\mathrm{reg}}$.
If $\Sing\CF=\varnothing$, then $\CF$ is said to be \textit{regular} or \textit{non-singular}.
\end{definition}

\begin{definition}
A singular foliation $\CF$ of $M$ is said to be \textit{transversely holomorphic} if $\CF^{\mathrm{reg}}$ is transversely holomorphic.
That is, $\CF^{\mathrm{reg}}$ admits a transversal complex structure invariant by holonomies.
\end{definition}

Note that a holomorphic foliation is a transversely holomorphic foliation.
We can say little about $\Sing\CF$ in general, however, it is well-known that the complex codimension of $\Sing\CF$ is greater than one if $\CF$ is holomorphic and if $\CF$ is of dimension one \cite{IY}*{Theorem~2.22} or of codimension one \cite{CM}.
In view of this fact, we will assume that the complex codimension of $\Sing\CF$ is greater than one when holomorphic foliations are considered.
Actually, we will assume that $\CF$ is of complex codimension one in what follows even if $\CF$ is only transversely holomorphic, although some of our arguments are valid for foliations of complex codimension greater than one.

\section{Fatou and Julia sets}
We briefly recall the definition of the Fatou sets for foliations in the sense of~\cite{asuke:julia}.
Let $\CF$ be a transversely holomorphic foliation of a manifold $M$, of complex codimension one.
We assume that $\Sing\CF\cap\partial M=\varnothing$ and that the leaves of the regular part $\CF^{\mathrm{reg}}$ of $\CF$ are transversal to $\partial M$.
Let $T$ be a complete transversal for $\CF^{\mathrm{reg}}$, namely, we assume that $T$ meets every leaf of $\CF^{\mathrm{reg}}$ (so that $T$ is quite possibly disconnected).
We may moreover assume that $T$ is biholomorphic to a disjoint union of discs in $\C$, where the complex structure of $T$ is induced by the transversal holomorphic structure of $\CF^{\mathrm{reg}}$.
Let $\vG$ be the holonomy pseudogroup of $\CF^{\mathrm{reg}}$ on $T$.
We have then a Fatou--Julia decomposition of $T$ \cite{asuke:julia}*{Definitions~2.2 and ~2.10}.
Roughly speaking, the Fatou set is defined as follows.
Let $\mathcal{T}$ be the set of relatively compact open subsets of $T$.
Let $T'\in\mathcal{T}$ and $\vG_{T'}$ the \textit{restriction} of $\vG$ to $T'$, namely, we set
\[
\vG_{T'}=\{\gamma\in\vG\mid\text{$\dom\gamma\subset T'$ and $\range\gamma\subset T'$}\},
\]
where $\dom\gamma$ and $\range\gamma$ denote the domain and range of $\gamma$, respectively.
Note that $\vG_{T'}$ is a pseudogroup on $T'$.
An open connected subset $U$ of $T'$ is said to be an \textit{F-open set} if every germ of elements of $\vG_{T'}$ at a point in $U$ is represented by an element of $\vG$ (not $\vG_{T'}$ in general) defined on $U$, where the letter `F' stands for `Fatou'.
We then define $F^*(\vG_{T'})$ to be the union of F-open sets and $J^*(\vG_{T'})$ its complement in $T'$.
Finally, the Julia set of $(\vG,T)$ is defined by
\[
J(\vG)=\overline{\bigcup_{T'\in\mathcal{T}}J^*(\vG_{T'})},
\]
and $F(\vG)=T\setminus J(\vG)$.

\begin{remark}
\begin{enumerate}
\item
The Fatou and Julia sets $F(\vG)$ and $J(\vG)$ in this article are denoted by $F_{\mathrm{pg}}(\vG)$ and $J_{\mathrm{pg}}(\vG)$, respectively, in \cite{asuke:julia}.
That is, we can consider \textup{pseudosemigroups} generated by pseudogroups, and $F(\vG)$ and $J(\vG)$ in \cite{asuke:julia} refer to the Fatou and Julia sets of $\vG$ as pseudosemigroups, respectively.
If the pseudogroup is compactly generated, then these coincide but in general~not.
\item
The notion of wF-open sets also appears in~\cite{asuke:julia}, and F-open sets are defined to be wF-open sets with additional properties.
It is known that a wF-open set is always an F-open set if $\vG$ is a pseudogroup (they may differ if $\vG$ is a pseudosemigroup).
See~\cite{asuke:julia} for the details.
\end{enumerate}
\end{remark}

\begin{definition}[\cite{asuke:julia}*{Definition~5.3}]
\label{def2.2}
The saturation of $F(\vG)$ is called the \textit{Fatou set} of $\CF$ and is denoted by $F(\CF)$.
The complement of $F(\CF)$ in $M$ is called the \textit{Julia set} of $\CF$ and is denoted by $J(\CF)$.
\end{definition}
Note that $J(\CF)$ is the union of $\Sing\CF$ and the saturation of $J(\vG)$.

Definition~\ref{def2.2} makes sense.
Indeed, we have the following

\begin{lemma}[\cite{asuke:julia}*{Lemma~2.18}]
Both $F(\vG)$ and $J(\vG)$ are invariant under $\vG$.
\end{lemma}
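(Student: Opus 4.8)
The plan is to reduce the two assertions to a single containment and then to prove that containment from the description of $F(\vG)$ built into its definition, using that composition with $\gamma$ and $\gamma^{-1}$ transports the relevant data. Since $F(\vG)$ and $J(\vG)$ partition $T$, and since each $\gamma\in\vG$ restricts to a homeomorphism of $\dom\gamma$ onto $\range\gamma$ with $\gamma^{-1}\in\vG$, it suffices to show $\gamma\bigl(F(\vG)\cap\dom\gamma\bigr)\subset F(\vG)$ for every $\gamma\in\vG$. Applying this to both $\gamma$ and $\gamma^{-1}$ gives $\gamma\bigl(F(\vG)\cap\dom\gamma\bigr)=F(\vG)\cap\range\gamma$, and taking complements inside $\dom\gamma$ and $\range\gamma$ then yields the same equality with $J(\vG)$ in place of $F(\vG)$.

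Next I would record the local description that follows from defining $J(\vG)$ as a closure: a point $x\in T$ lies in $F(\vG)$ if and only if there is an open neighborhood $V$ of $x$ meeting no $J^*(\vG_{T'})$, that is, with $V\cap T'\subset F^*(\vG_{T'})$ for every $T'\in\mathcal T$. Granting such a $V$ witnessing $x\in F(\vG)$, I would shrink $V$ so that $V\subset\dom\gamma$ and set $W=\gamma(V)$, an open neighborhood of $y=\gamma(x)$. The task is then to verify that $W$ witnesses $y\in F(\vG)$, i.e.\ that $W\cap T''\subset F^*(\vG_{T''})$ for every $T''\in\mathcal T$; equivalently, that each $z\in W\cap T''$ lies in some F-open set of $\vG_{T''}$. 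Writing $w=\gamma^{-1}(z)\in V$, the strategy is to manufacture such an F-open set by transporting, through $\gamma$, an F-open set of $\vG_{T'}$ around $w$ for a cleverly chosen relatively compact $T'$.

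The decisive choice is to take $T'$ \emph{larger} than $T''$: let $T'\in\mathcal T$ be a relatively compact open set containing the compact set $\overline{T''}\cup\{w\}$. Because $V$ witnesses $x\in F(\vG)$ and $w\in V\cap T'$, we get $w\in F^*(\vG_{T'})$, hence an F-open connected $U'\ni w$ for $\vG_{T'}$; replacing $U'$ by the component of $U'\cap V$ through $w$ (still F-open, as connected open subsets of F-open sets inherit the extension property), I would set $U''=\gamma(U')$, a connected open neighborhood of $z$. To see that $U''$ is F-open for $\vG_{T''}$, take $\delta\in\vG_{T''}$ with germ at $p\in U''$ and put $q=\gamma^{-1}(p)\in U'$. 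Then $\delta\circ\gamma$ lies in $\vG$, and since its range near $q$ sits in $\range\delta\subset T''\subset T'$ while its domain near $q$ lies in $U'\subset T'$, a small restriction of it represents an element of $\vG_{T'}$ with germ at $q$. By F-openness of $U'$ there is $\rho\in\vG$ defined on all of $U'$ with the same germ at $q$; then $\widetilde\delta=\rho\circ\gamma^{-1}$ lies in $\vG$, is defined on $\gamma(U')=U''$, and has the same germ as $\delta$ at $p$. Thus $z\in U''\subset F^*(\vG_{T''})$, as required.

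The step I expect to be the crux is precisely the bookkeeping just indicated: the map $\gamma$ need not respect relative compactness or the conditions $\dom,\range\subset T''$, so one must both enlarge to a relatively compact $T'\supset\overline{T''}$ (so that germs of $\vG_{T''}$ pre-composed with $\gamma$ still represent elements of $\vG_{T'}$) and localize to germs so that ranges stay inside $T''$. What makes the whole transport legitimate is that the extensions demanded by the definition of an F-open set are taken in $\vG$ itself rather than in the restricted pseudogroup; this is exactly what permits pre- and post-composition by $\gamma,\gamma^{-1}\in\vG$ without leaving the class of admissible extensions. Once the choice $T'\supset\overline{T''}\cup\{w\}$, the connectedness of $U''$, and the relative compactness of $T'$ are checked, the construction above together with the reduction in the first paragraph completes the proof.
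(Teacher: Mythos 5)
The paper itself states this lemma only with a citation to \cite{asuke:julia}*{Lemma~2.18} and contains no proof of it, so there is no in-paper argument to compare against; judged on its own terms, your proof is the natural germ-transport argument and is correct in substance. The reduction to $\gamma\bigl(F(\vG)\cap\dom\gamma\bigr)\subset F(\vG)$, the neighborhood characterization of $F(\vG)$ coming from the closure in the definition of $J(\vG)$, the enlargement $T'\supset\overline{T''}\cup\{w\}$ (in fact $T'\supset T''\cup\{w\}$ already suffices), and the bookkeeping that a small restriction of $\delta\circ\gamma$ at $q$ lies in $\vG_{T'}$ while $\rho\circ\gamma^{-1}$ lies in $\vG$ and is defined on all of $\gamma(U')$ all check out. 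Your observation that the extensions demanded by F-openness are taken in $\vG$ rather than in the restricted pseudogroup is indeed the point that makes the conjugation legitimate; note also that this matches the definition as given in this paper, which (per its Remark on wF-open sets) is adequate for pseudogroups.

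There is one step that is false as literally written: the final claim $z\in U''\subset F^*(\vG_{T''})$. By definition $F^*(\vG_{T''})$ is a union of F-open sets, which are connected open subsets \emph{of $T''$}, whereas $U''=\gamma(U')$ is merely a neighborhood of $z$ in $T$ and nothing in your construction forces $\gamma(U')\subset T''$ (only $z\in T''$ is known). The repair is immediate and uses exactly the inheritance fact you invoke earlier for $U'\cap V$: replace $U''$ by the connected component $U_0$ of $U''\cap T''$ containing $z$. Germs of elements of $\vG_{T''}$ are based only at points of $T''$, your argument extends each such germ at a point of $U_0$ to an element of $\vG$ defined on all of $U''$, and restricting that element to $U_0$ shows $U_0$ is F-open for $\vG_{T''}$, whence $z\in U_0\subset F^*(\vG_{T''})$. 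With this one-line patch the proof is complete.
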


\begin{definition}
A subset $X\subset M$ is said to be $\CF$-invariant if for every $p\in X$, we have $L_p\subset X$, where $L_p$ denotes the leaf which contains $p$.
\end{definition}

The following fundamental property is now clear from definitions.
\begin{lemma}
Both $F(\CF)$ and $J(\CF)$ are $\CF$-invariant.
\end{lemma}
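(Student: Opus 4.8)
The plan is to reduce both assertions to two elementary facts: that every saturated subset of $M$ (a union of leaves) is $\CF$-invariant, and that the complement of an $\CF$-invariant set is again $\CF$-invariant. The first is immediate from the definition of $\CF$-invariance, and the second rests only on $\CF=\{L_\lambda\}$ being a \emph{partition} of $M$ (Definition~\ref{def1.1}), so that the leaf through each point is unique.

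First I would handle $F(\CF)$. By Definition~\ref{def2.2} it equals the saturation of $F(\vG)\subset T$, that is, $F(\CF)=\{q\in M\mid L_q\cap F(\vG)\neq\varnothing\}$. For $p\in F(\CF)$ and any $p'\in L_p$ one has $L_{p'}=L_p$, whence $L_{p'}\cap F(\vG)\neq\varnothing$ and $p'\in F(\CF)$; thus $L_p\subset F(\CF)$, which is exactly $\CF$-invariance. I note that this uses nothing about $F(\vG)$ beyond its being a subset of $T$. The preceding lemma, that $F(\vG)$ is $\vG$-invariant, plays a different role: it guarantees $F(\CF)\cap T=F(\vG)$, so that the decomposition of $M$ genuinely restricts to the one on $T$, but it is not needed for the invariance itself. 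For $J(\CF)=M\setminus F(\CF)$ I would then argue by contradiction: if $p\in J(\CF)$ while some $p'\in L_p$ lay in $F(\CF)$, then the $\CF$-invariance of $F(\CF)$ just proved, together with $p\in L_{p'}=L_p$, would force $p\in F(\CF)$, a contradiction; hence $L_p\subset J(\CF)$.

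As a cross-check I would also verify the alternative route through the identity, recorded after Definition~\ref{def2.2}, that $J(\CF)$ is the union of $\Sing\CF$ and the saturation of $J(\vG)$: here $\Sing\CF$ is $\CF$-invariant because $\dim L_{p'}=\dim L_p$ for $p'\in L_p$, the saturation of $J(\vG)$ is $\CF$-invariant as a saturation, and a union of $\CF$-invariant sets is $\CF$-invariant. Since both arguments are purely formal, there is no real obstacle; the only points demanding a moment's care are the use of the partition property to pass from the leaf through $p'$ back to $p$, and the observation that $F(\vG)\subset M\setminus\Sing\CF$ forces $\Sing\CF\subset J(\CF)$, which is precisely what reconciles the two descriptions.
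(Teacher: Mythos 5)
Your proof is correct and matches the paper, which offers no written argument at all, stating only that the lemma ``is now clear from definitions'': your write-up is precisely that definitional unwinding (a saturation is a union of leaves, hence $\CF$-invariant, and the complement of an invariant set is invariant since $\CF$ is a partition). Your side remark correctly separates the roles, too --- the preceding lemma on $\vG$-invariance of $F(\vG)$ is what makes Definition~\ref{def2.2} well behaved, not what drives the invariance itself.
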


The Fatou and Julia sets do not depend on the choice of realizations of holonomy pseudogroups.
More precisely, there is a notion of \textit{equivalence} between pseudogroups.
Roughly speaking, an equivalence from $(\vG_1,T_1)$ to $(\vG_2,T_2)$ is a certain family of mappings from open sets of $T_1$ to $T_2$ which conjugates elements of $\vG_1$ and $\vG_2$.
Pseudogroups $(\vG_1,T_1)$ and $(\vG_2,T_2)$ are equivalent if they are associated with the same foliation.
For the details of equivalence, we refer the reader to \cite{Haefliger:FoliationsGD}.
See also \cite{asuke:julia}*{Definition~1.22}.
We have the following

\begin{theorem}[\cite{asuke:julia}*{Theorem~2.19}]
\label{thm2.3}
Let $(\vG_1,T_1)$ and $(\vG_2,T_2)$ be pseudogroups and $\Phi\colon\vG_1\to\vG_2$ an equivalence.
Then, we have $\Phi(F(\vG_1))=F(\vG_2)$ and $\Phi(J(\vG_1))=J(\vG_2)$.
\end{theorem}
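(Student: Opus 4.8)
The plan is to exploit two structural features of an equivalence $\Phi$: it carries germs of elements of $\vG_1$ bijectively to germs of elements of $\vG_2$, and it admits an inverse equivalence $\Phi^{-1}\colon\vG_2\to\vG_1$. Since $F(\vG)$ and $J(\vG)$ partition $T$, are $\vG$-invariant, and $\Phi$ induces a bijection between the $\vG_1$- and $\vG_2$-orbits, I would first reduce the whole statement to the single inclusion $\Phi(F(\vG_1))\subset F(\vG_2)$. Applying that inclusion to $\Phi^{-1}$ gives $\Phi^{-1}(F(\vG_2))\subset F(\vG_1)$, hence $F(\vG_2)\subset\Phi(F(\vG_1))$, so equality holds; the statement for Julia sets then follows by taking complements, using that $\Phi$ respects complements of saturated sets.

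To prove the inclusion, I would fix $x\in F(\vG_1)$ and $y=\phi(x)$ for some $\phi\in\Phi$. By definition there is an open neighbourhood $V$ of $x$ disjoint from $J(\vG_1)$; because $J(\vG_1)$ contains every $J^*((\vG_1)_{T'})$, this means that for \emph{every} relatively compact $T'\subset T_1$ each point of $V\cap T'$ lies in an F-open set of $(\vG_1)_{T'}$. Setting $W=\phi(V\cap\dom\phi)$, the goal becomes to show $W\cap J^*((\vG_2)_{T''})=\varnothing$ for every relatively compact $T''\subset T_2$, which is exactly the assertion $y\notin J(\vG_2)$.

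The heart of the matter is a transport-of-F-openness step. Given such a $T''$ and a point $y'\in W\cap T''$, write $y'=\phi(x')$ with $x'\in V$. I would cover $\overline{T''}$ by finitely many ranges of elements $\psi_1,\dots,\psi_n$ of $\Phi$ and choose a relatively compact open $T'\subset T_1$ containing $x'$ together with all the sets $\psi_i^{-1}(\overline{T''}\cap\range\psi_i)$. Since $x'\in V\cap T'$, it lies in an F-open set $U_1$ of $(\vG_1)_{T'}$, which after shrinking may be assumed to map into $T''$ under a chart of $\Phi$; let $U_2$ be the resulting open connected neighbourhood of $y'$ in $T''$. To check that $U_2$ is F-open for $(\vG_2)_{T''}$, I would take the germ at a point of $U_2$ of some $\gamma_2\in(\vG_2)_{T''}$ and conjugate it through the chosen $\psi_i$'s to a germ of an element $\gamma_1$ of $\vG_1$; the construction of $T'$ guarantees that both the source and target points of this germ lie in $T'$, so $\gamma_1\in(\vG_1)_{T'}$. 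As $U_1$ is F-open, this germ is represented by an element of $\vG_1$ defined on all of $U_1$, and conjugating back by $\Phi$ produces an element of $\vG_2$ defined on all of $U_2$ that represents $\gamma_2$. Hence $U_2$ is F-open and $y'\notin J^*((\vG_2)_{T''})$.

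The delicate point, and the step I expect to be the main obstacle, is the bookkeeping forced by the relatively compact restrictions: an element of $(\vG_2)_{T''}$ conjugates only to a \emph{germ} of $\vG_1$, whose representative may a priori leave $T'$, whereas $(\vG_1)_{T'}$ retains only elements whose domain \emph{and} range lie in $T'$. This is precisely why $T'$ must be built from a finite cover of $\overline{T''}$ by $\Phi$-charts—relative compactness is what makes a single such $T'$ available, and it also renders the construction under $\Phi^{-1}$ symmetric. Once domains and ranges are confined correctly, the germ-level conjugation and the extension ``defined on all of $U$'' pass through $\Phi$ formally, and the outer closure in the definition of $J(\vG)$ is absorbed because the neighbourhood $W$ is produced uniformly in $T''$.
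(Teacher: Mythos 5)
First, a point of comparison: this paper does not prove Theorem~\ref{thm2.3} at all --- it imports it as \cite{asuke:julia}*{Theorem~2.19} --- so your attempt can only be judged against the definitions quoted here and against the analogous technique the paper does carry out (the proof of Theorem~\ref{thm2.6}). Your overall architecture is sensible and standard: the reduction by symmetry to $\Phi(F(\vG_1))\subset F(\vG_2)$ via the inverse equivalence, the uniform neighbourhood $W$ that absorbs the outer closure in $J(\vG_2)=\overline{\bigcup_{T''}J^*((\vG_2)_{T''})}$, the finite cover of $\overline{T''}$ by charts used to build $T'$, and the germ-level conjugation (both endpoints of the conjugated germ land in $T'$, so it is a germ of $(\vG_1)_{T'}$) are all correct. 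A small repair is needed where you ask for a relatively compact $T'$ containing the sets $\psi_i^{-1}(\overline{T''}\cap\range\psi_i)$: these need not be relatively compact in $T_1$ (their closures can accumulate on the frontier of $\dom\psi_i$), so you should instead cover $\overline{T''}$ by finitely many compact sets $C_i\subset\range\psi_i$ and put $\bigcup_i\psi_i^{-1}(C_i)$ inside $T'$.

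The genuine gap is the final step, ``conjugating back by $\Phi$ produces an element of $\vG_2$ defined on all of $U_2$.'' F-openness of $U_1$ hands you an extension $\widetilde{\gamma}_1\in\vG_1$ with $\dom\widetilde{\gamma}_1=U_1$, but its image $\widetilde{\gamma}_1(U_1)$ is an uncontrolled open subset of $T_1$ and need not lie in the domain of any single element of $\Phi$; the charts only cover $T_1$ locally. For a chart $\chi\in\Phi$, the composite $\chi\circ\widetilde{\gamma}_1\circ\psi^{-1}$ is an element of $\vG_2$ representing the germ of $\gamma_2$, but it is defined only on $\psi\bigl(\widetilde{\gamma}_1^{-1}(\dom\chi)\cap U_1\bigr)$, a subset of $U_2$ depending on the germ being extended; and the locally defined conjugates for different charts $\chi,\chi'$ differ on overlaps by the nontrivial elements $\chi'\circ\chi^{-1}\in\vG_2$, so they do not glue to a single element of $\vG_2$ on $U_2$. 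Thus the extension property does \emph{not} ``pass through $\Phi$ formally''; if it did, the equivalence-invariance of the decomposition would be a triviality, whereas the definition quoted in Section~2 is explicitly only a rough one. In \cite{asuke:julia} an F-open set carries, in addition, the wF (normal family) property for the family of all extensions with domain $U_1$, and that equicontinuity is exactly what closes your gap: the images of the relevant base points lie in the compact set $\bigcup_i\psi_i^{-1}(C_i)$, so one can shrink $U_1$ \emph{once and for all} so that every extension in the family maps it into a set of diameter less than a Lebesgue number of a finite cover of a neighbourhood of that compact set by chart domains; only then does each extension land in a single chart and the push-forward become legitimate. This is precisely the kind of uniform control (derivative bound $C$, $\delta$-balls, holonomies acting as isometries) that drives the inductive extension argument in the paper's proof of Theorem~\ref{thm2.6}; without some such uniformity your transport step fails, and the proposal as written is incomplete at its central point.
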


\begin{lemma}
The Fatou and Julia sets $F(\CF)$ and $J(\CF)$ do not depend on the choice of realizations of the holonomy pseudogroup of $\CF^{\mathrm{reg}}$.
\end{lemma}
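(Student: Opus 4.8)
The plan is to deduce this from Theorem~\ref{thm2.3} together with the fact, recalled just before that theorem, that any two realizations of the holonomy pseudogroup are equivalent. Concretely, let $T_1$ and $T_2$ be two complete transversals for $\CF^{\mathrm{reg}}$, and let $(\vG_1,T_1)$ and $(\vG_2,T_2)$ be the associated holonomy pseudogroups. Since both are associated with the same foliation $\CF^{\mathrm{reg}}$, they are equivalent; I would fix an equivalence $\Phi\colon\vG_1\to\vG_2$. Theorem~\ref{thm2.3} then yields $\Phi(F(\vG_1))=F(\vG_2)$ and $\Phi(J(\vG_1))=J(\vG_2)$, so the transverse sets already match under $\Phi$.

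It then remains to pass from the transversal to $M$. By Definition~\ref{def2.2}, $F(\CF)$ is the saturation of $F(\vG)$, while $J(\CF)$ is the union of $\Sing\CF$ with the saturation of $J(\vG)$; observe that $\Sing\CF$ does not involve the choice of transversal. Hence the statement reduces to the claim that if $A_1\subset T_1$ and $A_2\subset T_2$ satisfy $\Phi(A_1)=A_2$, then the saturations of $A_1$ and of $A_2$ in $M$ coincide. Applying this with $A_1=F(\vG_1)$, $A_2=F(\vG_2)$, and then with $A_1=J(\vG_1)$, $A_2=J(\vG_2)$, will show that $F(\CF)$ and $J(\CF)$ computed from $T_1$ and from $T_2$ agree.

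To prove the claim I would use that the equivalence $\Phi$ between the holonomy pseudogroups of one and the same foliation is realized by holonomy transport along the leaves of $\CF^{\mathrm{reg}}$: each element $\varphi\in\Phi$ sends a point $x\in\dom\varphi$ to a point $\varphi(x)$ lying on the same leaf $L_x$. Consequently every point of $\Phi(A_1)$ lies on a leaf meeting $A_1$, so the saturation of $\Phi(A_1)$ is contained in that of $A_1$. For the reverse inclusion I would invoke completeness of $T_2$: every leaf through a point of $A_1$ also meets $T_2$, and the corresponding holonomy map, being an element of $\Phi$, carries that point into $\Phi(A_1)=A_2$ along the same leaf; hence the saturation of $A_1$ is contained in that of $A_2$. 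The two inclusions give the claim, and the lemma follows.

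The only delicate point, and the step I expect to be the main obstacle, is this leaf-preservation property of $\Phi$: Theorem~\ref{thm2.3} controls the transverse dynamics but says nothing a priori about how the two transversals sit inside $M$. The geometric content is therefore to verify that the equivalence arising from two complete transversals of the same foliation is genuinely induced by holonomy, so that it preserves leaves and commutes with saturation; once this is recorded the remainder of the argument is formal.
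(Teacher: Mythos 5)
Your proposal is correct and follows essentially the same route as the paper, which simply invokes Theorem~\ref{thm2.3} and notes that the saturation of $F(\vG)$ is then independent of the realization; your elaboration that the equivalence is induced by holonomy transport along leaves (hence preserves saturations) is exactly the implicit content of that one-line argument. Nothing is missing beyond detail the paper chose to suppress.
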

\begin{proof}
By Theorem~\ref{thm2.3}, the saturation of $F(\vG)$ is independent of the choice of $(\vG,T)$.
Therefore, so is $F(\CF)$.
\end{proof}

\begin{remark}
The Fatou--Julia decomposition for foliations was first introduced in \cite{GGS:julia} and refined in \cite{Haefliger:FoliationsGD}.
These definitions pay attention to deformations of foliations while the definition in \cite{asuke:julia} follows a rather classical definition in terms of normal families.
It is known that the Julia sets in the sense of~\cite{GGS:julia} and \cite{Haefliger:FoliationsGD} are contained in those of~\cite{asuke:julia}.
The inclusion can be either strict or not.
Note also that a Fatou--Julia decomposition of singular foliations of a complex surface with Poincar\'e type singularities is introduced in \cite{GGS:julia}*{Example~8.1}.
The Fatou--Julia decomposition given by Definition~2.2 of this article differs from it in general.
See 2) of Example~\ref{ex2.15}.
\end{remark}

We need the following

\begin{definition}[\cite{Haefliger:FoliationsGD}*{1.3}, cf.~\cite{asuke:julia}*{Definition~3.1}]
A pseudogroup $(\vG,T)$ is \textit{compactly generated} if there is a relatively compact open set $T'$ of $T$, and a finite collection of elements $\{\gamma_1,\ldots,\gamma_r\}$ of $\vG$ of which the domains and the ranges are contained in $T'$ such that
\begin{enumerate}
\item
the family $\{\gamma_1,\ldots,\gamma_r\}$ generates $\vG_{T'}$, where $\vG_{T'}$ is the restriction of $\vG$ to $T'$,
\item
for each $\gamma_i$, there exists an element $\widetilde{\gamma}_i$ of $\vG$ such that $\dom\widetilde{\gamma}_i$ contains the closure of $\dom\gamma_i$ and that $\widetilde{\gamma}_i|_{\dom\gamma_i}=\gamma_i$,
\item
the inclusion of $T'$ into $T$ induces an equivalence from $\vG_{T'}$ to $\vG$.
\end{enumerate}
The pseudogroup $(\vG_{T'},T')$ is called a \textit{reduction} of $(\vG,T)$.
\end{definition}

It is known that if $(\vG,T)$ is compactly generated and if $(\vG',T')$ is equivalent to $(\vG,T)$, then $(\vG',T')$ is also compactly generated.

\begin{example}
If $(\vG,T)$ is a holonomy pseudogroup associated with a regular foliation of a closed manifold $M$, then $(\vG,T)$ is compactly generated.
Also, if $\CF$ is a complex foliation of a complex surface and if every singularity of $\CF$ is of Poincar\'e type, then the holonomy pseudogroup of $\CF^{\mathrm{reg}}$ is compactly generated.
See 1)~of Example~\ref{ex2.15} for a basic example of this kind.
\end{example}

\section{Fatou sets and transverse metrics}
The following is known.
\begin{theorem}[\cite{asuke:julia}*{Theorem~5.5}, \cite{Asuke:FJ}*{Theorem~4.21}]
\label{thm2.8}
The Fatou set $F(\CF)$ admits a transverse Hermitian metric transversely of class $C^{\mathrm{Lip}}_{\mathrm{loc}}$.
If in addition $\vG$ is compactly generated, then there is such a metric transversely of class $C^\omega$.
\end{theorem}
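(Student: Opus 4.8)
The plan is to realize everything on a complete transversal $T$ and to construct a conformal Hermitian metric on $F(\vG)\subset T$ that is invariant under the holonomy pseudogroup $\vG$; since $F(\CF)$ is by definition the saturation of $F(\vG)$, such a metric then descends to a transverse Hermitian metric on $F(\CF)$. I identify the transverse direction at $z\in T$ with $\C$ and fix a reference conformal metric $\sigma(z)\,\norm{dz}$ on $T$, taking the Poincar\'e metric on each component disc, which is real-analytic and behaves well under holomorphic maps.

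First I would use the defining property of the Fatou set to make the construction legitimate. On an F-open set every germ of $\vG_{T'}$ is represented by an element of $\vG$ defined on the whole set, so the relevant holonomy maps are genuine holomorphic maps on a common open $U\subset T$ with values in $T$. Because $T$ is biholomorphic to a union of discs it is bounded, hence such a family is locally uniformly bounded and, by Montel's theorem, normal; Cauchy's estimates then bound all derivatives locally uniformly. This local boundedness is exactly what legitimizes the definition
\[
\rho(z):=\sup\{\sigma(\gamma(z))\norm{\gamma'(z)}\mid\gamma\in\vG,\ z\in\dom\gamma\},
\]
where including $\gamma=\id$ gives $\rho(z)\geq\sigma(z)>0$, while normality gives $\rho(z)<\infty$ on $F(\vG)$.

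The invariance of $\rho(z)\,\norm{dz}$ under $\vG$ is the heart of the matter and follows from the chain rule together with the invertibility of elements of $\vG$. For $\gamma_0\in\vG$ with $z\in\dom\gamma_0$ one computes
\[
\rho(\gamma_0(z))\,\norm{\gamma_0'(z)}
=\sup_{\gamma}\sigma((\gamma\gamma_0)(z))\,\norm{(\gamma\gamma_0)'(z)},
\]
and since $\gamma\mapsto\gamma\gamma_0$ is a bijection between the germs of $\vG$ at $\gamma_0(z)$ and those at $z$ (with inverse $\delta\mapsto\delta\gamma_0^{-1}$), the right-hand side equals $\rho(z)$. The regularity is read off from the construction: each $z\mapsto\sigma(\gamma(z))\norm{\gamma'(z)}$ is real-analytic, and the normal-family bounds control their first derivatives locally uniformly, so the supremum is locally Lipschitz. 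This produces the transverse metric of class $C^{\mathrm{Lip}}_{\mathrm{loc}}$.

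For the real-analytic refinement under compact generation I would replace the supremum by an average, and this is the step I expect to be the main obstacle. Passing to a reduction on a relatively compact $T'$, the extension property forces the family of holonomy maps to be equicontinuous with relatively compact closure, so that by an Ascoli-type argument this closure carries the structure of a compact groupoid with a right-invariant measure $\mu$. One then sets
\[
\rho(z):=\int\sigma(\gamma(z))\,\norm{\gamma'(z)}\,d\mu(\gamma),
\]
which is again $\vG$-invariant by the same composition identity, now inheriting real-analyticity because an integral of a locally uniformly analytically bounded family of real-analytic functions is real-analytic. Verifying that the closure genuinely supplies such an invariant measure, and that averaging preserves positivity and invariance in the pseudogroup setting, is the delicate point; the gain over the Lipschitz case is precisely that averaging, unlike taking a supremum, does not destroy analyticity.
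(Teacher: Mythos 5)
First, note that the paper itself does not prove Theorem~\ref{thm2.8}: it is imported from \cite{asuke:julia}*{Theorem~5.5} and \cite{Asuke:FJ}*{Theorem~4.21}, so your proposal has to be measured against the definitions of Section~2 and the strategy of those sources. Your invariance argument (the bijection $\gamma\mapsto\gamma\gamma_0$ on germs) and the descent from $F(\vG)$ to $F(\CF)$ are fine, but there is a genuine gap at the very first step: the claim that normality gives $\rho(z)<\infty$ on $F(\vG)$. Montel's theorem bounds a family of maps with a \emph{common} domain, whereas your supremum ranges over all germs of $\vG$ at $z$. The paper's Fatou set only guarantees that for \emph{each} relatively compact $T'$ there is an F-open set $U_{T'}\ni z$ on which germs of $\vG_{T'}$ extend, and $U_{T'}$ may shrink as $T'$ exhausts $T$; nothing provides one neighborhood serving all of $\vG$ at once, so the Cauchy estimates degenerate. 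Concretely, on a disc component of $T$ take $\vG$ generated by affine maps $g_n$ defined only on $B(z_0,r_n)$ with $r_n\to0$, sending $z_0$ into pairwise disjoint small balls accumulating on $\partial T$, with $\norm{g_n'(z_0)}=c/r_n\to\infty$. Since the ranges eventually escape every relatively compact $T'\subset T$, each restriction $\vG_{T'}$ contains only finitely many of these germs, and one checks that $J^*(\vG_{T'})=\varnothing$ for every $T'$, so $F(\vG)$ is everything; yet $\rho(z_0)=\sup_n\sigma(g_n(z_0))\norm{g_n'(z_0)}=\infty$ (with your Poincar\'e choice of $\sigma$ this is even worse, since $\sigma$ blows up at $\partial T$). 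An invariant $C^{\mathrm{Lip}}_{\mathrm{loc}}$ metric does exist in this example --- one pushes the metric forward along the $g_n$, obtaining a metric that is \emph{not} bounded below, cf.\ Remark~\ref{rem3.13-2} --- but it is not produced by your global supremum. This is exactly the non-compactly-generated phenomenon the present paper is concerned with (cf.\ the ``additional observations'' in the proof of Theorem~\ref{thm2.6} and Remark~\ref{rem3.13}): the construction must proceed locally, through the reductions $\vG_{T'}$, which is also why the statement only claims regularity $C^{\mathrm{Lip}}_{\mathrm{loc}}$.

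For the compactly generated half your sketch is essentially the known route: after passing to a reduction $(\vG_{T'},T')$ equivalent to $(\vG,T)$, the supremum is finite with locally uniform Lipschitz bounds, and the upgrade to $C^\omega$ by taking the closure of the now equicontinuous pseudogroup and averaging against an invariant measure is the Ghys--Gomez-Mont--Saludes/Haefliger mechanism used in the cited proofs. Be aware, though, that this step is not a routine Ascoli argument: it relies on Haefliger's structure theory for closures of complete, equicontinuous, compactly generated pseudogroups, and it presupposes the equicontinuity furnished by the $C^{\mathrm{Lip}}$ invariant metric (or an equivalent bound) before the closure can be formed. So your proposal is sound in spirit for the compactly generated case, but as written it does not prove the first, more general, assertion of the theorem.
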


Simple examples show that the converse does not hold (see Example~\ref{ex2.15} and Remark~\ref{rem3.13-2}).
We will show a partial converse to Theorem~\ref{thm2.8} by using the notion of compact approximations (Definition~\ref{def3.6}).

We will make the following
\begin{assumption}
Let $\CF$ be a transversely holomorphic foliation of $M$, and $(\vG,T)$ the holonomy pseudogroup of $\CF^{\mathrm{reg}}$.
We fix a Riemannian metric, say $g$ on $M$.
We also fix a realization of $(\vG,T)$ by choosing a complete transversal for $\CF^{\mathrm{reg}}$, namely, an embedding of\/ $T$ into $M\setminus\Sing\CF$.
\end{assumption}

\begin{definition}
\label{def3.3}
Let $M$ be a manifold equipped with a Riemannian metric $g$ and $\CF$ a transversely holomorphic foliation of $M$.
Let $(\vG,T)$ be a holonomy pseudogroup of $\CF^{\mathrm{reg}}$, where a realization of $T$ in $M\setminus\Sing\CF$ is fixed.
If $U\subset T$ and if $h$ is a Hermitian metric on $U$, then $h$ is said to be \textit{bounded from below} with respect to $g$ if there exists $c>0$ such that $cg(v,v)\leq h(v,v)$ holds on $TU$.
If $M$ is compact, then this does not depend on the choice of a particular Riemannian metric so that we simply say that $h$ is bounded from below.
\end{definition}
We refer to \cite{Chern} for basics of metrics.

\begin{remark}
If $M$ is specifically given, then there can be a natural choice of the reference metric.
For example if $M=\C P^2$, then the most natural one is the Fubini--Study metric while if $M=\C^2$, then the most natural one is the standard Hermitian metric.
In examples in this article, we choose these metrics.
\end{remark}

\begin{definition}
\label{def3.6}
Let $U$ be an open subset of $M\setminus\Sing\CF$.
A family $\{K_n\}_{n\in\N}$ of closed subsets of $U$ is called a \textit{compact approximation} if the following conditions are satisfied:
\begin{enumerate}[i)]
\item
Each $K_n$ is a closed subset of $U$ with boundary of class $C^1$, and $K_n\subsetneq U$.
\item
Each $K_n$ is either saturated by leaves of $\CF^{\mathrm{reg}}$ or $\partial K_n$ is transversal to $\CF^{\mathrm{reg}}$.
\item
The holonomy pseudogroup of the foliation obtained by restricting $\CF^{\mathrm{reg}}$ to $K_n$ is compactly generated.
\item
For each $n$, we have $K_n\subset\mathop{\mathrm{Int}}K_{n+1}$, where $\mathop{\mathrm{Int}}K_{n+1}$ denotes the interior of $K_{n+1}$.
\item
We have $U=\bigcup_{n\in\N}K_n$.
\end{enumerate}
We say also that $U$ is \textit{compactly approximated} by $\{K_n\}_{n\in\N}$.
\end{definition}
In practice, the index $n$ may begin by an arbitrary integer.

\begin{remark}
In Definition~\ref{def3.6}, the term `compact' is related to the compact generation \textup{(}g\'en\'eration compacte\textup{)} of holonomy pseudogroups so that a compact approximation $\{K_n\}_{n\in\N}$ may not necessarily consist of compact sets.
\end{remark}

\begin{remark}
There are some typical cases where the condition~iii) in Definition~\ref{def3.6} is satisfied:
\begin{enumerate}
\item
Each $K_n$ is compact.
\item
For each $n$, $\partial K_n$ is tangent to $\CF$ and there exists a compact subset, say $K'_n$, of $K_n$ with the following properties:
\begin{enumerate}[i)]
\item
$\partial K_n'\setminus\partial K_n$ is of class $C^1$ and transversal to $\CF$.
\item
The restriction of $\CF$ to $K_n\setminus\mathop{\mathrm{Int}}K'_n$ is a product foliation.
\end{enumerate}
\end{enumerate}
We will actually make use of this fact in Example~\ref{ex2.15}.
\end{remark}

We give some basic examples of compact approximations.
\begin{example}
\label{ex2.11}
Let $(z,w)$ be the standard coordinates for $\C^2$.
Let us consider $\omega=\mu wdz-\lambda zdw$, where $\lambda,\mu\in\C\setminus\{0\}$.
We set $\alpha=\lambda/\mu$ and denote by $\CF_{\alpha}$ the foliation of $\C^2$ defined by $\omega$.
\begin{enumerate}
\item
Suppose that $\alpha\not\in\R_{\leq0}$.
Set $K_n=\{(z,w)\in\C^2\mid\norm{z}^2+\norm{w}^2\geq1/n^2\}$ for $n\geq1$.
Then, $\{K_n\}_{n\geq1}$ is a compact approximation of $\C^2\setminus\{(0,0)\}=M\setminus\Sing\CF_{\alpha}$ such that $\partial K_n$ is transversal to $\CF_\alpha$ for each~$n$.
\item
Suppose that $\alpha\in\R_{<0}$.
We define $f\colon\C^2\to\R$ by setting $f(z,w)=\norm{z}\norm{w}^{-\alpha}$.
If we set $K_n=\{(z,w)\in\C^2\mid f(z,w)\geq1/n\}$ for $n\geq1$, then $\{K_n\}$ is a compact approximation of $\C^2\setminus\{(z,w)\in\C^2\mid zw=0\}$ such that $\partial K_n$ is tangent to $\CF_\alpha$ for each~$n$.
\item
In general, suppose that $\dim_\C M=2$, $\CF$ is a holomorphic foliation of $M$, and that $\Sing\CF$ is a finite set.
If moreover each singularity is of Poincar\'e type, then $M\setminus\Sing\CF$ admits a compact approximation.
Indeed, we fix a metric on $M$ and set $K_n=\{p\in M\mid\mathop{\mathrm{dist}}(p,\Sing\CF)\geq1/n\}$.
If $N\in\N$ is large enough, then $\{K_n\}_{n\geq N}$ is a compact approximation of $M\setminus\Sing\CF$.
For example, if $\alpha\not\in\R$ in the case~1), then $\CF_\alpha$ is extended to $\C P^2$ with $\Sing\CF_\alpha=\{[0:0:1],[0:1:0],[1:0:0]\}$, where $[z_0:z_1:z_2]$ denotes the standard homogeneous coordinates.
A compact approximation for $\C P^2\setminus\Sing\CF_\alpha$ is given by setting $K_n=\C P^2\setminus(\{[z_0:z_1:1]\mid\norm{z_0}^2+\norm{z_1}^2<1/n^2\}\cup\{[z_0:1:z_2]\mid\norm{z_0}^2+\norm{z_2}^2<1/n^2\}\cup\{[1:z_1:z_2]\mid\norm{z_1}^2+\norm{z_2}^2<1/n^2\})$.
\end{enumerate}
\end{example}

Now we will show the following

\begin{theorem}
\label{thm2.6}
Let $\CF$ be a transversely holomorphic foliation of complex codimension one, of a manifold $M$ equipped with a reference metric $g$.
Let $U\subset M\setminus\Sing\CF$ be an $\CF$-invariant open set.
Suppose that
\begin{enumerate}
\item
There exists a transverse Hermitian metric on $U$ invariant under the holonomies and bounded from below.
\item
The open set $U$ admits a compact approximation.
\end{enumerate}
Then, $U$ is contained in the Fatou set of $\CF$.
\end{theorem}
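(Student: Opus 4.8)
The plan is to push the statement down to the holonomy pseudogroup on the transversal, to verify the Fatou condition locally on the compact pieces supplied by the compact approximation, and then to patch these local conclusions together, the last step being where the failure of compact generation must be overcome.

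First I would pass to the transversal. Since $U$ is $\CF$-invariant and $T$ is a complete transversal for $\CF^{\mathrm{reg}}$, the set $U$ is the saturation of $T_U:=T\cap U$; as $F(\CF)$ is by Definition~\ref{def2.2} the saturation of $F(\vG)$, it suffices to prove $T_U\subset F(\vG)$, that is, $T_U\cap J(\vG)=\varnothing$. Because $U$ is saturated, every $\gamma\in\vG$ whose domain meets $T_U$ carries that part of $T_U$ back into $T_U$, so in studying germs at a point $p\in T_U$ I may work entirely within $T_U$, where the invariant metric $h$ is available along the whole orbit. Now fix $p\in T_U$. Since $U=\bigcup_n K_n$ and $K_n\subset\mathop{\mathrm{Int}}K_{n+1}$, there is an $N$ with $p\in\mathop{\mathrm{Int}}K_N$. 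By condition~ii) the restriction $\CF^{\mathrm{reg}}|_{K_N}$ is a foliation whose holonomy pseudogroup $\vG_N$ is, by condition~iii), compactly generated, and $h$ restricts to an invariant transverse Hermitian metric on its transversal. On the compact set $K_N$ the reference metric $g$ is bounded above and below by positive constants, and $h\geq cg$ by hypothesis~(1), so $h$ is comparable to $g$ there. Writing $h=\rho\,\norm{dz}^2$ and using that invariance gives $\norm{\gamma'(z)}^2=\rho(z)/\rho(\gamma(z))$ for $\gamma\in\vG$, this comparability bounds the distortion of holonomies that remain in $K_N$. This is exactly the compactly generated situation, so I would invoke \cite{asuke:julia}*{Proposition~4.24} to conclude that the transversal part of $\mathop{\mathrm{Int}}K_N$ lies in the Fatou set of $\vG_N$.

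The main obstacle is the passage from the restricted pseudogroups $\vG_N$ to the global pseudogroup $\vG$, which is precisely where $\vG$ fails to be compactly generated and where the ``additional observations'' are needed. For a relatively compact $T'$, a germ of $\vG_{T'}$ at $p$ may be representable only by a holonomy that leaves $K_N$ before returning, so membership in $F(\vG_N)$ does not at once give the F-open condition for $\vG_{T'}$; one must show that such a germ still extends, as an element of $\vG$, over a neighborhood of $p$ that is independent of $T'$. Here the hypothesis that $h$ is bounded from below on all of $U$ is essential: it prevents $\rho$ from degenerating along the orbit, so that $\norm{\gamma'(z)}^2=\rho(z)/\rho(\gamma(z))\leq\rho(z)/\bigl(c\,g(\gamma(z))\bigr)$ stays bounded as long as the range remains in the relatively compact $T'$, where $g$ is bounded below. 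This uniform control of the expansion yields a uniform lower bound on the sizes of the domains over which the germs extend, hence equicontinuity of the relevant family and the sought F-open neighborhood of $p$ valid for every $T'$.

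Once such a neighborhood $V$ of $p$ is produced with $V\cap J^*(\vG_{T'})=\varnothing$ for all relatively compact $T'$, openness of $V$ gives $V\cap\overline{\bigcup_{T'}J^*(\vG_{T'})}=\varnothing$, so $p\notin J(\vG)$; as $p\in T_U$ was arbitrary, $T_U\subset F(\vG)$ and therefore $U\subset F(\CF)$. The delicate point I expect is reconciling the closure in $J(\vG)=\overline{\bigcup_{T'}J^*(\vG_{T'})}$ with the exhaustion by the $K_n$: the F-open neighborhood produced on each compact piece must be shown not to shrink to nothing as $T'$ exhausts $T$, and it is the global lower bound on $h$, rather than any single compactly generated reduction, that supplies this uniformity.
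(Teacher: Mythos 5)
You correctly reduce to the transversal, and you correctly isolate the crux: a germ of $\vG_{T'}$ at a point of $K_N\cap T'$ may only be realized by a holonomy whose leafwise path leaves $K_N$, so per-piece compact generation (via \cite{asuke:julia}*{Proposition~4.24}) does not by itself yield the F-open condition for $\vG_{T'}$. But your resolution of that crux has a genuine gap. From invariance you get $\norm{\gamma'(z)}^2=\rho(z)/\rho(\gamma(z))$, hence a uniform bound on $\norm{\gamma'}$ wherever $\gamma$ is \emph{already defined} with values in the relatively compact $T'$; you then assert that this ``uniform control of the expansion yields a uniform lower bound on the sizes of the domains over which the germs extend.'' That inference is unjustified: a derivative bound on the locus where a holonomy is defined cannot enlarge its domain of definition --- extendability of a holonomy germ is a statement about the foliation (the leafwise path continuing over nearby transversal points), not about distortion, and a holonomy element can have derivative of modulus one while being defined only on an arbitrarily small set. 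The paper closes exactly this hole by a word-by-word mechanism: for fixed $n$ and $T'$, condition~iii) of Definition~\ref{def3.6} together with $K_n\subset\mathop{\mathrm{Int}}K_{n+1}$ furnishes a \emph{finite} generating set $\{\gamma_i\}$ of $\vG_n$ whose germs at every $p\in K_n\cap T'$ extend, as elements of $\vG_{n+1}$, over Euclidean balls $B_\delta(p)$ of a \emph{uniform} radius $\delta$ with $\norm{(\gamma_i')_p}\leq C$; then an induction on word length (after \cite{Ghys:flot}*{Lemme~2.2}) shows every element of $\vG_n$ extends over the fixed $h$-ball $B'_{\delta'}(p)$, $\delta'=\delta c/2C$, the key being that each $\gamma\in\vG_n$ is an $h$-isometry on $U$, so $\gamma(B'_{\delta'}(p))=B'_{\delta'}(\gamma(p))$ exactly, and the lower bound $h\geq c^2h_0$ places this $h$-ball inside the Euclidean $\delta$-ball on which the next generator is defined. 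The radius $\delta'$ is independent of word length; this interaction of hypotheses (1) and (2) is the ``additional observation'' of the paper, and it is precisely the step your derivative estimate tries to shortcut.

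Two smaller points. First, the ``delicate point'' you flag at the end --- that the F-open neighborhood must not shrink as $T'$ exhausts $T$ --- is a red herring: since $J(\vG)=\overline{\bigcup_{T'}J^*(\vG_{T'})}$ and $U$ is open, it suffices to prove $U\cap J^*(\vG_{T'})=\varnothing$ for each fixed $T'$ separately, and the constants $\delta,C,\delta'$ are allowed to depend on $n$ and $T'$; the uniformity that actually matters is over word length within $\vG_n$ for fixed $T'$, which the induction supplies. Second, you stop short of the concluding steps: one still needs normality of the family of extended elements (automatic by Montel, since $T$ is realized as a bounded disjoint union of discs in $\C$), which gives property (wF) for $B'_{\delta'}(p)$, and then the isometry identity $\gamma(B'_{\delta'}(p))=B'_{\delta'}(\gamma(p))$ again to see that ranges are wF-open, upgrading wF to F-open and yielding $p\in F^*(\vG_{T'})$. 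These are minor compared with the main gap, but without the extension induction the argument does not go through.
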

\begin{proof}
Let $(\vG,T)$ be the holonomy pseudogroup of $\CF^{\mathrm{reg}}$.
The proof is basically parallel to the case where $\vG$ is compactly generated, we need however additional observations.
We denote by $\mathcal{T}$ the set of relatively compact subsets of $T$.
Let $T'=\{T'_i\}\in\mathcal{T}$ and $\vG_{T'}$ the restriction of $\vG$ to $T'$, where $T'_i$'s denote the connected components of $T'$.
Let $\{K_n\}$ be a compact approximation of $U$.
We will show that $K_n\cap\,T'\subset F^*(\vG_{T'})$ for any $n$.
Once this is established, we have $U\cap T'\subset F^*(\vG_{T'})$ so that $U\cap\,J^*(\vG_{T'})=\varnothing$ for any $T'$.
It follows that $U\cap\Bigl(\bigcup_{T'\in\mathcal{T}}J^*(\vG_{T'})\Bigr)=\varnothing$.
Since $U$ is open, $U\cap J(\vG)=U\cap\,\overline{\bigcup_{T'\in\mathcal{T}}J^*(\vG_{T'})}=\varnothing$.
This will complete the proof.\par
First, we will choose $T\subset M\setminus\Sing\CF$ in such a way that there is an embedding of $T$ in $\C$ which is holomorphic with respect to the transverse holomorphic structure.
In addition, we assume that $T\subset\C$ is a disjoint union of relatively compact discs and that the reference metric $g$ restricted on $T$ is bounded from below with respect to the standard Hermitian metric of $\C$ which we denote by~$h_0$.\par
Let now $h$ be a transverse Hermitian metric on $U$ as in the statement.
We denote by $\vG_n$ the holonomy pseudogroup of $\CF|_{K_n}$ associated with $K_n\cap T'$.
As $T'$ is relatively compact, we can find a finite set $\{\gamma_i\}$ of generators of $\vG_n$.
Therefore, there are $\delta>0$ and $C>0$ such that the germ of any $\gamma_i$ at a point, say $p$, in $K_n\cap T'$ is represented by an element of $\vG$, actually of $\vG_{n+1}$, defined on the $\delta$-ball $B_\delta(p)\subset T$ centered at $p$ and $\norm{(\gamma_i')_p}\leq C$.
Note that we may assume that $C\geq1$.
On the other hand, we have the following, namely, let $B'_\delta(p)$ be the $\delta$-ball with respect to $h$ centered at $p$.
By the assumption, $h$ is bounded from below so that we have $h\geq c^2h_0$ for some $c>0$.
We have then
\[
\forall\,p\in K_n\cap T',\ \forall\,\delta>0,\ B'_\delta(p)\subset T'\Rightarrow B'_\delta(p)\subset B_{\delta/c}(p).
\]
We now set $\delta'=\delta c/2C$.
By decreasing $\delta'$ if necessary, we may assume that $B_{\delta'}(p)\subset U$.
We claim then that the germ of any element of $\vG_n$ at any $p\in K_n\cap T'$ is represented by an element of $\vG_{n+1}$ defined on $B'_{\delta'}(p)$.
This is shown as follows.
Let $\vG_n(k)$ be the subset of $\vG_n$ which consists of elements presented by composition of at most $k$ generators, where $\vG_n(0)$ is generated by $\{\id_{K_n\cap T'}\}$, and let $\vG_n(k)_p$ be the set of germs at $p$ of elements of $\vG_n(k)$.
We have $\vG_n=\bigcup_{k=0}^{+\infty}\vG_n(k)$.
If $\gamma_p\in\vG_n(1)_p$, then $B'_{\delta'}(p)\subset B_{\delta/2C}(p)\subset B_\delta(p)$ so that the claim holds.
Assume by induction that $\gamma_p\in\vG_n(k)_p$ is represented by an element of $\vG_{n+1}$ defined on $B'_{\delta'}(p)$.
Let $\zeta_p\in\vG_n(k+1)_p$.
Then, $\zeta_p$ is represented by an element of $\vG_n$ of the form $\gamma_i\circ\gamma$, where $\gamma\in\vG_n(k)_p$ and $\gamma_i$ is one of the generators.
We may assume that $\gamma$ is well-defined on $B'_{\delta'}(p)$ as an element of $\vG_{n+1}$.
We have $\gamma(B'_{\delta'}(p))=B'_{\delta'}(\gamma(p))\subset B_{\delta/2C}(\gamma(p))\subset B_{\delta}(\gamma(p))$ because $\gamma$ is an isometry on $U$.
As $\gamma(p)\in T'$, $\gamma_i$ is well-defined on $B'_{\delta'}(\gamma(p))$ as an element of $\vG_{n+1}$.
It follows that $\gamma_i\circ\gamma$ is also well-defined on $B'_{\delta'}(p)$ as an element of $\vG_{n+1}$.
Since $T$ is assumed to be a disjoint union of relatively compact discs in $\C$, the family
\[
\vG_{n+1}(U)=\{\gamma\in\vG_{n+1}\mid\dom\gamma=U,\ \gamma(U)\cap T'\neq\varnothing\}
\]
which consists of elements of $\vG_{n+1}$ obtained by extension as above, is a normal family.
This directly shows that $B'_{\delta'}(p)$ has the property \textrm{(wF)} \cite{asuke:julia}.
Let now $\gamma\in\vG_n$ and $\dom\gamma\subset B'_{\delta'}(p)$.
Since $\gamma(B'_{\delta'}(p))=B'_{\delta'}(\gamma(p))$, $\range\gamma$ itself is again a wF-open set.
Thus $B'_{\delta'}(p)$ is an F-open set so that $p\in F^*(\vG_{T'})$.
\end{proof}

\begin{remark}
\label{rem3.13}
\begin{enumerate}
\item
The induction in the proof is taken from the proof of~\cite{Ghys:flot}*{Lemme~2.2}.
\item
If $\vG$ is compactly generated, then we can choose $T'$ in the above proof so that $(\vG_{T'},T')$ is equivalent to $(\vG,T)$ and that the arguments can be simplified (cf. \cite{asuke:julia}*{Proposition~4.24}).
\end{enumerate}
\end{remark}

\begin{example}[cf. Example~\ref{ex2.11} and \cite{asuke:julia}*{Example~5.11}]
\label{ex2.15}
Let $[z_0:z_1:z_2]$ be the standard homogeneous coordinates for $\C P^2$.
We set $x=z_0/z_2$, $y=z_1/z_2$ if $z_2\neq0$.
Let $\omega=\mu ydx-\lambda xdy$ be a holomorphic $1$-form on $\C^2$, where $\lambda,\mu\neq0$.
We set $\alpha=\lambda/\mu$ and let $\CF_\alpha$ be the foliation of $\C^2$ defined by $\omega$.
We denote by $\CG_\alpha$ the natural extension of $\CF_\alpha$ to $\C P^2$.
We set $a=z_0/z_1$, $b=z_2/z_1$ if $z_1\neq0$, and $u=z_1/z_0$, $v=z_2/z_0$ if $z_0\neq0$.
We set $\C^2(x,y)=\{[x:y:1]\in\C P^2\}$.
Similarly we define $\C^2(a,b)$ and $\C^2(u,v)$.
\begin{enumerate}
\item
Suppose that $\alpha\not\in\R$.
Set $U=\C P^2\setminus\{z_0z_1z_2=0\}$.
It is known that the Fatou set $F(\CG_\alpha)$ is equal to $U$.
We define $f\colon\C P^2\to\R$ by
\[
f([z_0:z_1:z_2])=\frac{\norm{z_0}^2\norm{z_1}^2\norm{z_2}^2}{(\norm{z_0}^2+\norm{z_1}^2+\norm{z_2}^2)^3}.
\]
We have
\[
f(x,y)=\frac{\norm{x}^2\norm{y}^2}{(1+\norm{x}^2+\norm{y}^2)^3}
\]
and
\begin{align*}
\pdif{f}{x}(x,y)&=\frac{\bar{x}\norm{y}^2(1-2\norm{y}^2+\norm{z}^2)}{(1+\norm{x}^2+\norm{y}^2)^4},\\*
\pdif{f}{y}(x,y)&=\frac{\bar{y}\norm{x}^2(1-2\norm{z}^2+\norm{w}^2)}{(1+\norm{x}^2+\norm{y}^2)^4}.
\end{align*}
We set, for $n\geq28$,
\[
K_n=\left\{[z_0:z_1:z_2]\in\C P^2\mid f([z_0:z_1:z_2])\geq\frac{1}{n}\right\}.
\]
Note that $K_n$ is a compact subset contained in $U$.
This can be seen for example by the fact that $(1,1)$ is the unique maximum of the function $(t,s)\mapsto(ts)/(1+t+s)^3$, where $t,s>0$.
We will show that $\partial K_n$ is transversal to $\CG_\alpha$.
If we restrict ourselves to $U\cap\C^2(x,y)=\{(x,y)\in\C P^2\mid xy\neq0\}$, then by Lemma~\ref{transversality} below, $\partial K_n$ is transversal to $\CF_\alpha$ if and only if
\stepcounter{theorem}
\[
\lambda(1-2\norm{x}^2+\norm{y}^2)+\mu(1-2\norm{y}^2+\norm{x}^2)\neq0\rlap{.}
\tag{\thetheorem}
\label{eq2.15}
\]
Suppose the contrary and represent $\alpha=\lambda/\mu$ as $\alpha=\rho+\sqrt{-1}\sigma$, where $\rho,\sigma\in\R$.
By the assumption $\sigma\neq0$, the equalities
\begin{align*}
&(1+\rho)+(-2+\rho)\norm{x}^2+(1-2\rho)\norm{y}^2=0,\\*
&1-2\norm{y}^2+\norm{x}^2=0
\end{align*}
hold by \eqref{eq2.15}.
It follows that $3-3\norm{y}^2=0$ and further that $\norm{x}=\norm{y}=1$.
As $f(1,1)=1/27$, we never have $(\norm{x},\norm{y})=(1,1)$ for $(x,y)\in\partial K_n\cap\C^2(x,y)$.
Since $K_n$ is contained in $\C^2(x,y)$, we see that $\partial K_n$ is transversal to $\CG_\alpha$.
Therefore $\{K_n\}_{n\geq28}$ is a compact approximation of $U$.
We now set $\omega'=\frac{1}{\lambda}\frac{dz}{z}-\frac{1}{\mu}\frac{dw}{w}$.
Then, $d\omega'=0$ and $\omega'$ also defines $\CG_\alpha$ on $U$.
Therefore, an invariant metric on $U$ is defined by setting $h=\omega'\otimes\overline{\omega'}$.
The metric $h$ is bounded from below so that $U$ is contained in the Fatou set of $\CG_\alpha$.
In this case, it is also known that $F(\CF_\alpha)=U\cap\C^2(x,y)$.
The family $\{K_n\}_{n\geq28}$ is a compact approximation of $U\cap\C^2(x,y)$ with respect to~$\CF_\alpha$.
\item
If $\alpha\in\R$, then the Fatou--Julia decomposition of $F(\CF_\alpha)$ and that of $F(\CG_\alpha)$ are known to be different~\cite{asuke:julia}*{Example~5.11}.
This is also seen by Theorem~\ref{thm2.6} as follows.
\begin{enumerate}[i)]
\item
First we study $\CF_\alpha$.
\begin{enumerate}[a),leftmargin=*]
\item
Assume that $\alpha>0$.
Then, $F(\CF_\alpha)=\C^2\setminus\{(0,0)\}$ and a transverse invariant metric, say $h$, on $F(\CF_\alpha)$ is given by $h=\eta_\alpha\otimes\overline{\eta_\alpha}$, where
\[
\eta_\alpha=\frac{\alpha ydx-xdy}{\norm{x}^{\alpha+1}+\norm{y}^{(\alpha+1)/\alpha}}.
\]
Note that $h$ is bounded from below.
If we set $K_n=\{(x,y)\in\C^2\mid1/n\leq\norm{x}^2+\norm{y}^2\leq n\}$, then $\{K_n\}_{n\geq1}$ is a compact approximation of $\C^2\setminus\{(0,0)\}$.
\item
Assume that $\alpha<0$.
Then, $F(\CF_\alpha)=\C^2\setminus\{(x,y)\in\C^2\mid xy=0\}$ and a transverse invariant metric $h$ on $F(\CF_\alpha)$ is given by $h=\nu_\alpha\otimes\overline{\nu_\alpha}$, where
\[
\nu_\alpha=\alpha\frac{dx}{x}-\frac{dy}y.
\]
The metric $h$ is bounded from below.
A compact approximation of $F(\CF_\alpha)$ is given by $\{K_n\}$ with $K_n=\{(x,y)\in\C^2\mid 1/n\leq\norm{x}\norm{y}\leq n\}$.
\end{enumerate}
\item
Next we study $\CG_\alpha$.
\begin{enumerate}[a),leftmargin=*]
\item
Assume that $\alpha>0$.
By exchanging $z_0$ and $z_1$ if necessary, we may assume that $0<\alpha<1$.
We have $F(\CG_\alpha)=\C P^2\setminus\{[z_0:z_1:z_2]\in\C P^2\mid z_1z_2=0\}$.
Note that we have $F(\CG_\alpha)\cap\C(x,y)=\{(x,y)\in\C^2\mid y\neq0\}$ while we have $F(\CF_\alpha)=\C^2\setminus\{(0,0)\}$.
This is because $\CG_\alpha$ is isomorphic to $\CF_{\alpha/(\alpha-1)}$ on $\C^2(a,b)$ and to $\CF_{1/(1-\alpha)}$ on $\C^2(u,v)$.
As $0<\alpha<1$, we have $\alpha/(\alpha-1)<0$ so that we are in the same situation as in the case i)-b).
Namely, the singularity $(0,0)$ on $\C^2(a,b)$ is of Siegel type (not of Poincar\'e type) so that both the $a$-axis and the $b$-axis are contained in the Julia set $J(\CG_\alpha)$ of $\CG_\alpha$.
Therefore the $y$-axis and the $u$-axis are contained in $J(\CG_\alpha)$.
It follows that $J(\CF_\alpha)\neq\C^2\cap J(\CG_\alpha)$.
Note that this shows that the Julia sets in the sense of Definition~\ref{def2.2} and those of~\cite{GGS:julia}*{Example~8.1} are different in general.
Set
\[
\gamma_\alpha=\frac{\alpha ydx-xdy}{\norm{x}^k(\norm{x}^{\alpha l}+\norm{y}^l)},
\]
where $k+\alpha l=1+\alpha$.
We have
\begin{align*}
\norm{\gamma_\alpha}
&=\frac{\norm{\alpha bda-(\alpha-1)adb}}{\norm{a}^k\norm{b}^{3-k-l}(\norm{a}^{\alpha l}\norm{b}^{(1-\alpha)l}+1)}\\*
&=\frac{\norm{(1-\alpha)udv-vdu}}{\norm{v}^{3-k-l}(\norm{u}^l+\norm{v}^{(1-\alpha)l})}.
\end{align*}
Therefore $h=\gamma_\alpha\otimes\overline{\gamma_\alpha}$ gives an invariant metric on $F(\mathcal{G}_\alpha)$.
If we set $k=l=1$, then $h$ is bounded from below.
A compact approximation of $F(\mathcal{G}_\alpha)$ is given by $\{K_n\}_{n\geq1}$ with $K_n=\{[z_0:z_1:z_2]\mid\norm{z_0}^{1-\alpha}\norm{z_2}^\alpha\geq\norm{z_1}/n\}$.
We have $K_n\cap\C^2(x,y)=\{(x,y)\mid\norm{x}^{1-\alpha}\geq\norm{y}/n\}$ and $K_n\cap\C^2(a,b)=\{(a,b)\mid\norm{a}^{1-\alpha}\norm{b}^\alpha\geq1/n\}$.
\item
If $\alpha=1$, then $\CG_1$ is transversal to the line at infinity $\{[z_0:z_1:0]\}$ and we have $\Sing\CG_1=\{[0:0:1]\}$.
We have $F(\CG_1)=\C P^2\setminus\{[0:0:1]\}$.
Note that $F(\CF_1)=F(\CG_1)\cap\C(x,y)$.
If we~set
\begin{align*}
K_n&=\{[z_0:z_1:z_2]\mid\norm{z_0}^2+\norm{z_1}^2\geq1/n\norm{z_2}\}\\*
&=\{(x,y)\in\C(x,y)\mid\norm{x}^2+\norm{y}^2\geq1/n\}\cup\{[z_0:z_1:0]\},
\end{align*}
then $\{K_n\}_{n\geq1}$ is a compact approximation of $\C P^2\setminus\{[0:0:1]\}$.
An invariant metric on $\C P^2\setminus\{[0:0:1]\}$ is given by $\eta_1\otimes\overline{\eta_1}$.
\item
If $\alpha<0$, then $[0:1:0]$ and $[1:0:0]$ are of Poincar\'e type so that we have the case~a) again.
\end{enumerate}
\end{enumerate}
\end{enumerate}
\end{example}

\begin{remark}
\label{rem3.13-2}
We need both a metric and a compact approximation in Theorem~\ref{thm2.6}.
Let $\CF_\alpha$ be as in Example~\ref{ex2.15}.
\begin{enumerate}
\item
If $\alpha\not\in\R$, then $\C^2\setminus\{(0,0)\}$ admits a compact approximation with respect to $\CF_\alpha$ however there are no invariant metrics on $U$.
Indeed, the dynamics along the $z$-axis and the $w$-axis are contracting-repelling.
\item
If $\alpha=-1$, then $\C^2\setminus\{(0,0)\}$ admits an invariant metric.
Indeed, if we set $\eta'=ydx+xdy$, then $\eta'\otimes\overline{\eta'}$ gives an invariant metric.
However, $\C^2\setminus\{(0,0)\}$ does not admit a compact approximation.
Indeed, if $\{K_n\}$ is a compact approximation, then the restriction of $\CF^{\mathrm{reg}}$ to $K_n$ is compactly generated so that it cannot contain the $x$-axis and $y$-axis at the same time.
Note that $\eta'\otimes\overline{\eta'}$ is not bounded from below.
\item
Let again $\alpha=-1$, and set $\eta'=ydx+xdy$.
If we set $U=\{(x,y)\in\C^2\mid y\neq0\}$, then $U$ admits a compact approximation $\{K_n\}$, where $K_n=\{(x,y)\in\C^2\mid\norm{x}\geq1/n\}$.
The metric $\eta'\otimes\overline{\eta'}$ is certainly invariant but not bounded from below.
As the $y$-axis is contained in $J(\CF_{-1})$, $U$ is not contained in $F(\CF_{-1})$.
\end{enumerate}
\end{remark}

The following lemma is well-known but we give a proof for completeness.

\begin{lemma}
\label{transversality}
Let $U\subset\C^n$ be an open subset and $g\colon U\to\R$ a smooth function.
Set $M=g^{-1}(c)$, where $c\in g(U)$ is assumed to be a regular value.
Finally let $X=\sum\limits_{i=1}^nf_i\pdif{}{z_i}$ be a holomorphic vector field on $U$, where $(z_1,\ldots,z_n)$ are the standard coordinates for $\C^n$.
Then, $X$ is transversal to $M$ at $p\in M$ if and only if
\[
\sum_{i=1}^nf_i(p)\pdif{g}{z_i}(p)\neq0
\]
holds, where $X$ is said to be transversal to $M$ at $p$ if the integral curve of $X$ transversally intersects $M$ at $p$.
\end{lemma}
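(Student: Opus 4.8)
The plan is to translate everything into real terms on $\R^{2n}\cong\C^n$ and then to recognize the stated complex quantity as encoding two real transversality conditions simultaneously. Write $z_j=x_j+\sqrt{-1}\,y_j$ and $f_j=a_j+\sqrt{-1}\,b_j$ with $a_j,b_j$ real. The integral curve of the holomorphic vector field $X$ through $p$ is the leaf of the holomorphic foliation defined by $X$, a complex one-dimensional submanifold; as a real submanifold it is two-dimensional, and its tangent plane at $p$ is the $J$-invariant real $2$-plane $\Pi_p=\langle X_\R(p),JX_\R(p)\rangle_\R$, where $J$ is the standard complex structure and $X_\R=\sum_j(a_j\,\partial/\partial x_j+b_j\,\partial/\partial y_j)$ is the real vector field underlying $X$. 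I assume $X(p)\neq0$; if $X(p)=0$ then $p$ is a singular point, both conditions in the asserted equivalence fail, and the equivalence holds trivially.

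First I would reduce the geometric statement to linear algebra. Since $c$ is a regular value, $M$ is a real hypersurface with $T_pM=\ker dg_p$, a real hyperplane of $\R^{2n}$. The leaf meets $M$ transversally at $p$ exactly when $\Pi_p+T_pM=\R^{2n}$; because $T_pM$ has real codimension one, this is equivalent to $\Pi_p\not\subset\ker dg_p$, that is, to $dg_p$ not vanishing identically on $\Pi_p$. Hence transversality holds if and only if at least one of $dg_p(X_\R(p))$ and $dg_p(JX_\R(p))$ is nonzero.

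The decisive step is then the identity
\[
\sum_{j=1}^n f_j(p)\,\pdif{g}{z_j}(p)=\frac12\bigl(dg_p(X_\R(p))-\sqrt{-1}\,dg_p(JX_\R(p))\bigr),
\]
which I would verify by substituting $\partial/\partial z_j=\frac12(\partial/\partial x_j-\sqrt{-1}\,\partial/\partial y_j)$ together with $f_j=a_j+\sqrt{-1}\,b_j$ and separating real and imaginary parts, using $JX_\R=\sum_j(a_j\,\partial/\partial y_j-b_j\,\partial/\partial x_j)$. Granting this, the left-hand complex number vanishes if and only if its real and imaginary parts vanish simultaneously, i.e.\ if and only if $dg_p(X_\R(p))=0$ and $dg_p(JX_\R(p))=0$; by the previous paragraph this is precisely the failure of transversality, and negating gives the lemma. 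The only genuine subtlety, and the point I would emphasize, is conceptual rather than computational: one must read ``integral curve'' as the full complex leaf, so that transversality involves the entire $J$-invariant plane $\Pi_p$ and hence both $X_\R(p)$ and $JX_\R(p)$. Were one instead to test only the real flow direction $X_\R(p)$, one would recover merely $\mathrm{Re}\sum_j f_j\,\partial g/\partial z_j\neq0$, which is not the stated criterion. Once this interpretation is fixed, the Wirtinger bookkeeping is routine.
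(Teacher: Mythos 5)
Your proof is correct and follows essentially the same route as the paper's: both reduce transversality to non-tangency of the $J$-invariant real $2$-plane spanned by $X_\R(p)$ and $JX_\R(p)$ (using that $M$ has real codimension one), and both verify that the real and imaginary parts of $\sum_i f_i(p)\pdif{g}{z_i}(p)$ are, up to a factor, $dg_p(X_\R(p))$ and $-dg_p(JX_\R(p))$. Your explicit treatment of the degenerate case $X(p)=0$ is a small tidy addition the paper leaves implicit, but it does not change the argument.
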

\begin{proof}
First note that $X$ is transversal to $M$ at $p$ if and only if $X$ is not tangent to $M$ for the dimensional reason.
We identify $\C^n$ with $\R^{2n}$ and equip $\C^n$ with the standard Euclidean metric.
Let $x_i,y_i$ be the real and imaginary parts of $z_i$, respectively.
Then, the normal direction of $T_pM$ is given by $\sum\limits_{i=1}^n\left(\pdif{g}{x_i}(p)\pdif{}{x_i}_p+\pdif{g}{y_i}(p)\pdif{}{y_i}_p\right)$.
On the other hand, the tangent space of the integral curve of $X$ at $p$ is spanned by
\begin{align*}
&\sum_{i=1}^n\left(a_i(p)\pdif{}{x_i}_p+b_i(p)\pdif{}{y_i}_p\right)\ \text{and}\\*
&\sum_{i=1}^n\left(-b_i(p)\pdif{}{x_i}_p+a_i(p)\pdif{}{y_i}_p\right).
\end{align*}
Therefore, $X(p)$ is tangent to $T_pM$ if and only if both
\begin{align*}
&\sum_{i=1}^n\left(a_i(p)\pdif{g}{x_i}(p)+b_i(p)\pdif{g}{y_i}(p)\right)=0,\text{ and}\\*
&\sum_{i=1}^n\left(-b_i(p)\pdif{g}{x_i}(p)+a_i(p)\pdif{g}{y_i}(p)\right)=0
\end{align*}
hold.
This is equivalent to
\begin{align*}
&\hphantom{{}={}}%
\sum_{i=1}^nf_i(p)\pdif{g}{z_i}(p)\\*
&=\frac12\sum_{i=1}^n\left(a_i(p)\pdif{g}{x_i}(p)+b_i(p)\pdif{g}{y_i}(p)\right)%
+\frac{\sqrt{-1}}2\sum_{i=1}^n\left(b_i(p)\pdif{g}{x_i}(p)-a_i(p)\pdif{g}{y_i}(p)\right)\\*
&=0.\qedhere
\end{align*}
\end{proof}

\section{Julia sets and minimal sets}
We recall the following classical
\begin{definition}
Let $\CF$ be a foliation of a manifold $M$.
A subset $\mathscr{M}$ of $M$ is said to be \textit{minimal} if the following conditions are satisfied:
\begin{enumerate}
\item
$\mathscr{M}$ is non-empty and closed.
\item
$\mathscr{M}$ is minimal with respect to inclusions.
\item
$\mathscr{M}$ is saturated by leaves of $\CF$, namely, if $p\in\mathscr{M}$, then the leaf which passes through $p$ is contained in $\mathscr{M}$.
\end{enumerate}
\end{definition}

\begin{definition}
\label{def4.2}
Let $\mathscr{M}$ be a minimal set.
\begin{enumerate}
\item
We say that $\mathscr{M}$ is \textit{trivial} if it consists of a point in $\Sing\CF$.
\item
We say that $\mathscr{M}$ is \textit{proper} if it consists of a closed leaf of $\CF^{\mathrm{reg}}$.
\item
We say that $\mathscr{M}$ is \textit{exceptional} if it is non-trivial, non-proper and not equal to the whole $M$.
\end{enumerate}
\end{definition}

\begin{remark}
Let $\mathscr{M}$ be a minimal set.
\begin{enumerate}
\item
If $\CF$ is singular, then $\mathscr{M}$ cannot be equal to $M$.
\item
It is well-known that foliations of $\C P^n$ do not admit a closed leaf in $\CF^{\mathrm{reg}}$ (cf.~\cite{CLS}*{Theorem~2}).
Therefore, non-trivial minimal sets of $\C P^n$ are exceptional.
\item
The classification of minimal sets in Definition~\ref{def4.2} is known to work well for real codimension-one regular foliations~\cite{Duminy}.
On the other hand, even in the complex codimension-one case, it is not sufficient.
For example, let us consider a suspension of an action of a torsion-free Kleinian group on $\C P^1$.
In this case, $\mathscr{M}$ is contained in $J(\CF)$ which coincides with the suspension of the limit set \cite{Asuke:FJ}.
On the other hand, let $\CF$ be a foliation of $S^3\subset\C^2$ induced from the flow of a vector field $z\pdif{}{z}+\alpha w\pdif{}{w}$ with $\alpha\in\R_{>0}$.
Then, $\CF$ is always transversely Hermitian \textup{(}cf. 2) of Example~\ref{ex2.15}\textup{)}.
Suppose that $\alpha\not\in\Q$ and let $L$ be a leaf which does not belong to the Hopf link.
Then, the closure of $L$ forms a minimal set which is diffeomorphic to a $2$-torus as a submanifold of $S^3$.
This means that the notion of exceptional minimal sets should be made more~precise.
\end{enumerate}
\end{remark}

If foliations of $\C P^n$ are considered, then it is known that an exceptional minimal set contains a hyperbolic holonomy \cite{BLM}*{Th\'eor\`eme}.
That is, there is a loop on a leaf contained in the minimal set such that the associated holonomy, in other words, the first return map, or the Poincar\'e map, is of modulus not equal to~one.
This implies the following
\begin{theorem}
\label{thm4.3}
The Fatou set of a foliation of\/ $\C P^n$, of codimension one, does not contain any exceptional minimal sets.
\end{theorem}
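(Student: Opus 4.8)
The plan is to argue by contradiction, playing the cited hyperbolicity result of \cite{BLM} against the transverse invariant metric furnished by Theorem~\ref{thm2.8}. Suppose that $F(\CF)$ contained an exceptional minimal set $\mathscr{M}$. Being non-trivial, $\mathscr{M}$ meets $M\setminus\Sing\CF$ and contains a regular leaf, so the theorem of \cite{BLM} recalled just above applies: $\mathscr{M}$ carries a hyperbolic holonomy. Explicitly, there is a loop on a leaf $L\subset\mathscr{M}$ whose first-return map $\gamma$ is an element of the holonomy pseudogroup $\vG$ fixing the transversal point $p=L\cap T$ and satisfying $\norm{\gamma'(p)}\neq1$.

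First I would place $p$ in the transversal Fatou set. Since $F(\CF)$ is the saturation of $F(\vG)$ and $F(\vG)$ is $\vG$-invariant, one has $F(\CF)\cap T=F(\vG)$: if a point of $T$ lies on a leaf meeting $F(\vG)$, the holonomy along that leaf path is an element of $\vG$ and carries the intersection point, which is in $F(\vG)$, to the given point, which is therefore again in $F(\vG)$. As $p\in L\subset\mathscr{M}\subset F(\CF)$ and $p\in T$, we get $p\in F(\vG)$. Because $F(\vG)$ is open, we may shrink the domain of $\gamma$ so that it maps a neighborhood of $p$ into $F(\vG)$.

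The contradiction is then the standard incompatibility between a hyperbolic fixed point and an invariant metric. By Theorem~\ref{thm2.8}, $F(\CF)$ admits a transverse Hermitian metric $h$ invariant under $\vG$; on the transversal this is a positive-definite, continuous Hermitian metric on $F(\vG)$ with $\gamma^*h=h$ near $p$. Evaluating the identity $h_p(\gamma'(p)v,\gamma'(p)v)=h_p(v,v)$ at a nonzero tangent vector $v$ and using $h_p(\gamma'(p)v,\gamma'(p)v)=\norm{\gamma'(p)}^2h_p(v,v)$ together with $h_p(v,v)>0$ yields $\norm{\gamma'(p)}=1$, contradicting $\norm{\gamma'(p)}\neq1$. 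Hence $F(\CF)$ contains no exceptional minimal set.

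I expect the substance of the argument to be this last incompatibility, while the only points requiring care are the bookkeeping ones: verifying that the fixed point of the hyperbolic holonomy actually lies in $F(\vG)$, where the invariant metric is defined, and checking that the evaluation-at-a-fixed-point step uses only positivity and continuity of $h$ at $p$. The latter is reassuring, since Theorem~\ref{thm2.8} guarantees $h$ only transversely of class $C^{\mathrm{Lip}}_{\mathrm{loc}}$ in general, yet Lipschitz regularity is far more than the pointwise positivity actually used, so the weak regularity of $h$ poses no obstacle.
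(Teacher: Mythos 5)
Your proof is correct and follows essentially the same route as the paper: the paper's proof likewise combines the existence of a hyperbolic holonomy in any exceptional minimal set (the result of \cite{BLM} recalled just before the theorem, though the paper's proof cites \cite{CLS}*{Theorem~2}, apparently a slip) with the invariant transverse Hermitian metric on the Fatou set from Theorem~\ref{thm2.8}, concluding that the holonomy would have to be an isometry, contradicting $\norm{\gamma'(p)}\neq1$. Your additional bookkeeping --- checking $F(\CF)\cap T=F(\vG)$ so the fixed point lies where the metric is defined, and noting that only continuity and positivity of $h$ at $p$ are needed despite the metric being merely $C^{\mathrm{Lip}}_{\mathrm{loc}}$ --- is a sound and welcome expansion of the paper's terse argument.
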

\begin{proof}
The Fatou set admits an invariant transverse Hermitian metric by Theorem~\ref{thm2.8}.
By \cite{CLS}*{Theorem~2}, we can find a hyperbolic holonomy in the Fatou set.
This is impossible because the holonomy should be an isometry for the transverse Hermitian metric.
\end{proof}

Note that foliations of $\C P^2$ have unique minimal sets \cite{CLS}*{Theorem~1}.
Such minimal sets are contained in the Julia sets by Theorem~\ref{thm4.3}.
As an immediate consequence, we have the following

\begin{proposition}
Let $\CF$ be a foliation of\/ $\C P^2$ and\/ $\C P^2=F(\CF)\cup J(\CF)$ the Fatou--Julia decomposition.
Then, we have exactly one of the following:
\begin{enumerate}[label=\textup{\arabic*)}]
\item
We have $J(\CF)=\Sing\CF$, and $\CF$ admits no exceptional minimal set.
\item
We have $\Sing\CF\subsetneq J(\CF)\subsetneq\C P^2$.
If $\CF$ admits an exceptional minimal set, say $\mathscr{M}$, then either $\mathscr{M}\subset\partial F(\CF)\setminus\Sing\CF$ or $\mathscr{M}\subset J(\CF)\setminus(\partial F(\CF)\cup\Sing\CF)$.
In the latter case, the closure of any leaf in $\partial F(\CF)$ meets $\Sing\CF$.
\item
We have $\C P^2=J(\CF)$.
If $\CF$ admits an exceptional minimal set, say $\mathscr{M}$, then $\mathscr{M}\subset\C P^2\setminus\Sing\CF=J(\CF)\setminus\Sing\CF$.
\end{enumerate}
\end{proposition}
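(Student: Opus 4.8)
The three alternatives correspond to the two inclusions in the chain $\Sing\CF\subseteq J(\CF)\subseteq\C P^2$. Since $\CF$ has $1$-dimensional leaves at generic points, $\Sing\CF\neq\C P^2$, so the two inclusions cannot both be equalities; hence exactly one of $J(\CF)=\Sing\CF$, $\Sing\CF\subsetneq J(\CF)\subsetneq\C P^2$, and $J(\CF)=\C P^2$ holds, which already gives the ``exactly one'' structure. Throughout, recall that $F(\CF)$ is open, $J(\CF)$ is closed, and both are $\CF$-invariant. I would first record two facts about any exceptional minimal set $\mathscr{M}$. First, $\mathscr{M}\cap\Sing\CF=\varnothing$: a point $p\in\Sing\CF$ is itself a leaf, so $\{p\}$ is closed and saturated, and $p\in\mathscr{M}$ would force $\mathscr{M}=\{p\}$ by minimality, contradicting non-triviality. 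Second, $\mathscr{M}\subset J(\CF)$: the set $\mathscr{M}\cap J(\CF)$ is a closed, saturated subset of $\mathscr{M}$, hence equals $\varnothing$ or $\mathscr{M}$ by minimality, and it cannot be empty, since otherwise $\mathscr{M}\subset F(\CF)$, contradicting Theorem~\ref{thm4.3}.

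These two facts settle the extreme cases. In Case~1), any exceptional $\mathscr{M}$ would satisfy $\mathscr{M}\subset J(\CF)=\Sing\CF$ together with $\mathscr{M}\cap\Sing\CF=\varnothing$, forcing $\mathscr{M}=\varnothing$; hence no exceptional minimal set exists. In Case~3), $J(\CF)=\C P^2$ gives $\mathscr{M}\subset\C P^2\setminus\Sing\CF=J(\CF)\setminus\Sing\CF$ directly.

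For Case~2) the plan is to split $\mathscr{M}$ according to $\partial F(\CF)$. The crucial point is that $\partial F(\CF)=\overline{F(\CF)}\cap J(\CF)$ is saturated; this reduces to showing that $\overline{F(\CF)}$ is $\CF$-invariant, which follows by transporting a sequence in $F(\CF)$ converging to a point $p$ along the leaf $L_p$ by holonomy (the case $p\in\Sing\CF$ being trivial, as then $L_p=\{p\}$, and the case $p\notin\Sing\CF$ using that $L_p$ lies in the regular part). As $\partial F(\CF)$ is closed and saturated, $\mathscr{M}\cap\partial F(\CF)$ is a closed saturated subset of $\mathscr{M}$, so it equals $\varnothing$ or $\mathscr{M}$. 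Combined with $\mathscr{M}\cap\Sing\CF=\varnothing$, this yields either $\mathscr{M}\subset\partial F(\CF)\setminus\Sing\CF$ or $\mathscr{M}\subset J(\CF)\setminus(\partial F(\CF)\cup\Sing\CF)$, the required dichotomy.

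Finally, in the latter subcase $\mathscr{M}\cap\partial F(\CF)=\varnothing$, I would prove that $\overline{L}\cap\Sing\CF\neq\varnothing$ for every leaf $L\subset\partial F(\CF)$ by contradiction. If $\overline{L}\cap\Sing\CF=\varnothing$, then $\overline{L}$ is a nonempty compact saturated set disjoint from $\Sing\CF$, so it contains a minimal set $\mathscr{N}$ by the usual Zorn's lemma argument; being disjoint from $\Sing\CF$, $\mathscr{N}$ is non-trivial, and since $\C P^2$ admits no closed leaf in $\CF^{\mathrm{reg}}$, it is exceptional. Uniqueness of the minimal set on $\C P^2$ forces $\mathscr{N}=\mathscr{M}$, whence $\mathscr{M}\subset\overline{L}\subset\partial F(\CF)$, the last inclusion because $\partial F(\CF)$ is closed, contradicting $\mathscr{M}\cap\partial F(\CF)=\varnothing$. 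The main obstacle is the saturation of $\overline{F(\CF)}$ and the correct invocation of the uniqueness of non-trivial minimal sets; once these are secured, the case analysis above is routine.
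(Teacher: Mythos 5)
Your proposal is correct and takes essentially the same route as the paper: Theorem~\ref{thm4.3} places any exceptional minimal set $\mathscr{M}$ in $J(\CF)\setminus\Sing\CF$, the trichotomy comes from the chain $\Sing\CF\subseteq J(\CF)\subseteq\C P^2$, the dichotomy in case 2) from intersecting $\mathscr{M}$ with the closed invariant set $\partial F(\CF)$, and the final claim from the fact that a minimal set inside $\overline{L}$ is forced, by the uniqueness of minimal sets of $\C P^2$ and the absence of closed leaves, to be trivial, hence a singular point. You merely make explicit some steps the paper leaves implicit (the saturation of $\overline{F(\CF)}$ via holonomy transport and the Zorn-type existence of a minimal set in $\overline{L}$), which only strengthens the write-up.
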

\begin{proof}
First not that if $\mathscr{M}$ is an exceptional minimal set, then it is contained in $J(\CF)\setminus\Sing\CF$ by Theorem~\ref{thm4.3}.
Therefore, if $J(\CF)=\Sing\CF$, then such an $\mathscr{M}$ does not exist.
Suppose that $\Sing\CF\subsetneq J(\CF)$.
If $F(\CF)=\varnothing$, then we have the last case.
If $F(\CF)\neq\varnothing$, then $\partial F(\CF)$ is a non-empty invariant closed subset contained in $J(\CF)$.
Indeed, if $\partial F(\CF)=\varnothing$, then we have $\C P^2=F(\CF)$, which is absurd.
If $\partial F(\CF)=\Sing\CF$, then we have $F(\CF)=\C P^2\setminus\Sing\CF$ because $\Sing\CF$ consists of a finite set of points.
This implies that $J(\CF)=\Sing\CF$, which contradicts the assumption.
Since $\mathscr{M}$ is unique, $\mathscr{M}$ is contained in exclusively either $\partial F(\CF)$ or $J(\CF)\setminus\partial F(\CF)$.
Suppose that $\mathscr{M}\subset J(\CF)\setminus\partial F(\CF)$ and that $L$ is a leaf in $\partial F(\CF)$.
If $\partial L\neq\varnothing$, then it contains a minimal set, which should be trivial.
Therefore $\partial L\subset\Sing\CF$.
\end{proof}

We introduce the following in view of \cite{BLM}*{IV}.

\begin{definition}
Let $M$ be a complex manifold and $\CF$ a holomorphic foliation of $M$, of codimension one.
We say that $\CF$ satisfies the condition \textup{(H)} if there exists a meromorphic $1$-form on $M$ which is not identically zero and which defines $\CF$.
\end{definition}

\begin{definition}
Let $\omega$ be a meromorphic $1$-form on a complex manifold $M$.
We denote by $\Sing\omega$ the union of zeroes and poles of $\omega$.
\end{definition}

Note that $\Sing\CF\subset\Sing\omega$.

In a quite particular case, we can find a large Fatou set.
Suppose that $\CF$ satisfies the condition (H) and that $\omega$ has no zeroes.
This occurs for example on $M=\C P^2$, or almost equivalently, on $\C^2$.
Let $\omega=Pdx+Qdy$ be a polynomial $1$-form on $\C^2$.
If we set $\omega'=\frac{dx}{Q}+\frac{dy}{P}$, then $\omega'$ also defines $\CF$ on $\C^2\setminus\mathrm{Pole}(\omega')$, where $\mathrm{Pole}(\omega')=\{(x,y)\in\C^2\mid P(x,y)=0\text{ or }Q(x,y)=0\}$.
Then $\omega'$ has no zeroes.

Assume still that $\omega$ has no zeroes.
If moreover we can find a compact approximation of $M\setminus\mathrm{Pole}(\omega)$, then we have the following

\begin{theorem}
\label{thm2.12}
Let $\CF$ be a holomorphic foliation of a compact complex manifold $M$, of codimension one.
Suppose that $\CF$ satisfies the condition \textup{(H)} and let $\omega$ be a meromorphic $1$-form which defines $\CF$.
Suppose that the following conditions are satisfied\textup{:}
\begin{enumerate}
\item
The $1$-form $\omega$ is closed and has no zeroes.
\item
The complement $M\setminus\mathrm{Pole}(\omega)$ admits a compact approximation.
\end{enumerate}
Then we have $F(\CF)\supset M\setminus\mathrm{Pole}(\omega)$.
\end{theorem}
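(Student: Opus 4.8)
The plan is to reduce Theorem~\ref{thm2.12} to Theorem~\ref{thm2.6} by exhibiting, on the open set $U=M\setminus\mathrm{Pole}(\omega)$, a transverse invariant Hermitian metric which is bounded from below. The closed $1$-form $\omega$ with no zeroes furnishes the natural candidate $h=\omega\otimes\overline\omega$, and verifying its two required properties is the heart of the argument.

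First I would set up the geometry. Since $\omega$ defines $\CF$ and has no zeroes on $U$, at each point of $U$ the kernel of $\omega$ is exactly $T\CF^{\mathrm{reg}}$, and $\omega$ descends to a nowhere-vanishing holomorphic $1$-form on the local transversals, i.e. on a complete transversal $T\subset U$ it restricts to a holomorphic coframe for the transverse holomorphic structure. Thus $h=\omega\otimes\overline\omega$ is a genuine transverse Hermitian metric on $U$. To see that $h$ is invariant under the holonomies, I would use that $\omega$ is closed: on a connected transversal a primitive $w$ of $\omega$ exists locally, holonomy maps preserve the leaves hence the level sets of $w$ up to the foliated structure, and more concretely, because $\omega$ is a \emph{closed} defining form, any holonomy transformation $\gamma$ satisfies $\gamma^*\omega=\omega$ on its domain. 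This is the standard fact that a closed defining $1$-form makes the transverse structure a translation structure on which holonomies act as translations $z\mapsto z+\mathrm{const}$, hence as isometries for $h=\omega\otimes\overline\omega$; I would spell this out by noting $\gamma^*(\omega\otimes\overline\omega)=\gamma^*\omega\otimes\overline{\gamma^*\omega}=\omega\otimes\overline\omega$.

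The main obstacle is the boundedness from below of $h$. Here I would use crucially that $M$ is \emph{compact} and that $\omega$ has \emph{no zeroes} on all of $M\setminus\mathrm{Pole}(\omega)$ --- indeed $\omega$, being meromorphic with poles only along $\mathrm{Pole}(\omega)$, is a nowhere-zero holomorphic form on $U$. Comparison with the reference metric $g$ is a pointwise linear-algebra inequality $c\,g(v,v)\le h(v,v)$ on $TU$; the quantity $h(v,v)/g(v,v)$ is, after restricting to the transverse direction, a continuous positive function on $U$, and the issue is only that it could degenerate as one approaches $\mathrm{Pole}(\omega)$. But near a pole $\omega$ \emph{blows up} rather than vanishes, so $|\omega|$ tends to $+\infty$ there; hence the ratio is bounded below by a positive constant in a neighborhood of the poles, and by compactness of $M$ the infimum over the remaining (relatively compact away-from-poles) region is attained and positive. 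I would therefore conclude $h\ge c^2 h_0$ for some $c>0$, i.e. $h$ is bounded from below in the sense of Definition~\ref{def3.3}.

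With both hypotheses of Theorem~\ref{thm2.6} verified --- condition~(1) supplied by the invariant, bounded-from-below metric $h=\omega\otimes\overline\omega$ just constructed, and condition~(2) supplied directly by the standing assumption that $M\setminus\mathrm{Pole}(\omega)$ admits a compact approximation --- Theorem~\ref{thm2.6} applies to the $\CF$-invariant open set $U=M\setminus\mathrm{Pole}(\omega)$ and yields $U\subset F(\CF)$, which is exactly $F(\CF)\supset M\setminus\mathrm{Pole}(\omega)$. The only point requiring care beyond the routine is checking that $\mathrm{Pole}(\omega)$ is $\CF$-invariant so that $U$ is genuinely $\CF$-invariant; this follows because the poles of a defining form are a union of leaves and singularities of $\CF$, the defining property being preserved along leaves.
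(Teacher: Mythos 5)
Your proposal is correct and takes essentially the same route as the paper's proof: set $h=\omega\otimes\overline{\omega}$ on $U=M\setminus\mathrm{Pole}(\omega)$, observe that $d\omega=0$ makes $h$ holonomy-invariant and that the singularities of $h$ are only poles (so $h$ is bounded from below, since $\omega$ has no zeroes and $M$ is compact), and then invoke Theorem~\ref{thm2.6}. Your additional verifications---holonomies acting as translations in a primitive of $\omega$, the pointwise comparison with $g$ near $\mathrm{Pole}(\omega)$, and the $\CF$-invariance of $U$---merely spell out details the paper leaves implicit.
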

\begin{proof}
We set $h=\omega\otimes\overline{\omega}$ and $U=M\setminus\mathrm{Pole}(\omega)$.
As $d\omega=0$, $h$ determines an invariant Hermitian metric on $U$.
Moreover, singularities of $h$ are poles so that $h$ is bounded from below.
Then by Theorem~\ref{thm2.6}, $U$ is contained in the Fatou set of $\CF$.
\end{proof}

Note that as $\omega$ is closed, there are no exceptional minimal sets.
The assertion $F(\CF)\supset M\setminus\mathrm{Pole}(\omega)$ can be seen as a reproduction of this fact by Theorem~\ref{thm4.3}.
Note also that a typical example is a linear foliation of $\C P^2$ discussed in Example~\ref{ex2.15}.

\begin{bibdiv}
\begin{biblist}[\resetbiblist{99}]
\bib{AS}{article}{
author		={Androulidakis, Iakovos},
author		={Skandalis, Georges},
title		={The holonomy groupoid of a singular foliation},
journal		={J.~reine angew.~Math.},
volume		={626},
date		={2009},
pages		={1--37}
}
\bib{Asuke:FJ}{article}{
author		={Asuke, Taro},
title		={A Fatou--Julia decomposition of transversally holomorphic foliations},
journal		={Ann. Inst. Fourier (Grenoble)},
volume		={60},
date		={2010},
pages		={1057--1104}
}
\bib{asuke:julia}{article}{
author		={Asuke, Taro},
title		={On Fatou--Julia decompositions},
journal		={Ann. Fac. Sci. Toulouse},
volume		={22},
date		={2013},
pages		={155--195}
}
\bib{BLM}{article}{
author		={Bonatti, Christian},
author		={Langevin, R\'emi},
author		={Moussu, Robert},
title		={Feuilletages de $\mathbf{CP}(n)$ : de l'holonomie hyperbolique pour les minimaux exceptionnels},
journal		={Publ. I.H.E.S.},
volume		={75},
date		={1992},
pages		={123--134}
}
\bib{CLS}{article}{
author		={Camacho, C\'esar},
author		={Lins Neto, Alicides},
author		={Sad, Paulo},
title		={Minimal sets of foliations on complex projective spaces},
journal		={Publ. I.H.E.S.},
volume		={68},
date		={1988},
pages		={187--203}
}
\bib{CM}{book}{
author		={Cerveau, Dominique},
author		={Mattei, Jean--Fran\c cois},
title		={Formes int\'egrales holomorphes singuli\`eres},
series		={Ast\'eqisque},
volume		={97},
date		={1982}
}
\bib{Chern}{book}{
author		={Chern, Shiing--Shen},
title		={Complex manifolds without potential theory \textup{(with an appendix on the geometry of characteristic classes)}.
\textup{Revised printing of the second edition}},
series		={Universitext},
publisher	={Springer--Verlag},
place		={New York},
date		={1995}
}
\bib{Db}{article}{
author		={Debord, Claire},
title		={Holonomy groupoids of a singular foliations},
journal		={J.~Differential Geometry},
volume		={58},
date		={2001},
pages		={467--500}
}
\bib{Duminy}{article}{
author		={Duminy, G\'erard},
title		={L'invariant de Godbillon--Vey d'un feuilletage se localise dans les feuilles ressort},
journal		={preprint},
date		={1982}
}
\bib{Ghys:flot}{article}{
author		={Ghys, {\'E}tienne},
title		={Flots transversalement affines et tissus feuilletes},
journal		={Mem. Soc. Math. France (N.S.)},
volume		={46},
date		={1991},
pages		={123--150}
}
\bib{GGS:julia}{article}{
author		={Ghys, {\'E}tienne},
author		={G{\'o}mez-Mont, Xavier},
author		={Saludes, Jodi},
title		={Fatou and Julia components of transversely holomorphic foliations},
conference	={
	title		={Essays on geometry and related topics, vol. 1},
},
book={
	series		={Monogr. Enseign. Math.},
	volume		={38},
	publisher	={Enseignement Math.},
	place		={Geneva},
},
date={2001, 287\ndash 319}
}
\bib{Haefliger:FoliationsGD}{article}{
author		={Haefliger, Andr\'e},
title		={Foliations and compactly generated pseudogroups},
book		={
	title		={Foliations: geometry and dynamics (Warsaw, 2000)},
	publisher	={World Sci. Publ.},
	address		={River Edge, NJ},
	date		={2002}
},
pages		={275--295}
}
\bib{IY}{book}{
author		={Ilyashenko, Yulij},
author		={Yakovenko, Sergei},
title		={Lectures on Analytic Differential Equations},
series		={Graduate Studies in Mathematics},
volume		={86},
publisher	={Amer.~Math.~Soc.},
place		={Providence, Rhode Island},
date		={2007}
}
%
%
%
\end{biblist}
\end{bibdiv}
\end{document}